\def\cvd{~\vbox{\hrule\hbox{%
     \vrule height1.3ex\hskip0.8ex\vrule}\hrule } }
\newtheorem{thm}{Theorem}[section]
\newtheorem{cor}[thm]{Corollary}
\newtheorem{lem}[thm]{Lemma}
\newtheorem{prop}[thm]{Proposition}
\newtheorem{quest}[thm]{Question}
\newtheorem{rem}[thm]{Remark}
\newtheorem{defn}[thm]{Definition}
\newtheorem{ex}[thm]{Example}
\numberwithin{figure}{section}  
\numberwithin{table}{section}   
\definecolor{red}{rgb}{.8,0,0}
\definecolor{blu}{rgb}{0,0,1}
\def\mtx#1{\begin{bmatrix} #1 \end{bmatrix}}
\def\Circ#1#2{\operatorname{Circ}_{#1}\!\left(#2\right)}
\def\ccirc#1#2{\operatorname{Circ}_ {#1}\!\left(\left[#2\right]\right)}
\def\circs#1#2{\operatorname{Circ}_ {#1}\!\left(\{#2\}\right)}
\newcommand{\Z}{{\mathbb Z}}
\newcommand{\R}{{\mathbb R}}
\newcommand{\bx}{{\bf x}}
\newcommand{\by}{{\bf y}}
\newcommand{\D}{\mathcal D}
\newcommand{\PP}{\mathcal P}
\newcommand{\DL}{\D^L}
\newcommand{\spec}{\operatorname{spec}}
\newcommand{\dspec}{\operatorname{spec}_{\D}}
\newcommand{\dLspec}{\operatorname{spec}_{\DL}}
\newcommand{\dev}{\partial}
\newcommand{\dlev}{\partial^L}
\newcommand{\diag}{\operatorname{diag}}
\newcommand{\diam}{\operatorname{diam}}
\newcommand{\tr}{\operatorname{tr}}
\newcommand{\cp}{\, \Box\,}
\newcommand{\OL}{\overline}
\newcommand{\trace}{\operatorname{trace}}
\newcommand{\bit}{\begin{itemize}}
\newcommand{\eit}{\end{itemize}}
\newcommand{\ben}{\begin{enumerate}}
\newcommand{\een}{\end{enumerate}}
\newcommand{\beq}{\begin{equation}}
\newcommand{\eeq}{\end{equation}}
\newcommand{\bea}{\begin{eqnarray*}}
\newcommand{\eea}{\end{eqnarray*}}
\newcommand{\bean}{\begin{eqnarray}}
\newcommand{\eean}{\end{eqnarray}}
\newcommand{\bpf}{\begin{proof}}
\newcommand{\epf}{\end{proof}}
\newcommand{\x}{\times}
\newcommand{\lf}{\left\lfloor}
\newcommand{\rf}{\right\rfloor}
\newcommand{\LL}{\lambda}
\begin{document}

\bibliographystyle{plain}

\setcounter{page}{1}

\thispagestyle{empty}

 
\title{Graphs that are cospectral for the distance Laplacian}
\author{Boris Brimkov\thanks{Department of Computational and Applied Mathematics, Rice University, Houston, TX, 77005, USA (boris.brimkov@rice.edu)}
\and 
Ken Duna\thanks{Department of Mathematics, University of Kansas, Lawrence, KS, 66045, USA (kduna@ku.edu)} 
\and 
Leslie Hogben\thanks{Department of Mathematics, Iowa State University, Ames, IA 50011, USA and 
American Institute of Mathematics, 600 E. Brokaw Road, San Jose, CA 95112, USA (hogben@aimath.org).}
\and 
Kate Lorenzen\thanks{Department of Mathematics, Iowa State University, Ames, IA 50011, USA (lorenkj@iastate.edu, reinh196@iastate.edu, sysong@iastate.edu) } 
\and \hspace{200pt}
Carolyn Reinhart\footnotemark[4] 
\and 
Sung-Yell Song\footnotemark[4]
\and Mark Yarrow\thanks{School of Mathematics and Statistics, University of Sheffield, Sheffield, S3 7RH, United Kingdom (MAYarrow1@sheffield.ac.uk)}
}


\maketitle


\begin{abstract} 
The distance matrix $\D(G)$ of a graph $G$ is the matrix containing the pairwise distances between vertices, and the distance Laplacian matrix is $\DL(G)=T(G)-\D(G)$, where $T(G)$ is the diagonal matrix of row sums of $\D(G)$. We establish several general methods for producing  $\DL$-cospectral graphs that can be used to construct infinite families.  We provide examples showing that various properties are not preserved by $\DL$-cospectrality, including examples of $\DL$-cospectral strongly regular and circulant graphs.  We establish that the absolute values of coefficients of the distance Laplacian characteristic polynomial are decreasing, i.e., $|\delta^L_{1}|\geq \dots \geq |\delta^L_{n}|$ where $\delta^L_{k}$ is the coefficient of $x^k$. \end{abstract}

\begin{keywords} Distance Laplacian matrix, cospectrality, unimodality 
\end{keywords}
\begin{AMS} 05C50, 05C12, 15A18, 
15B57
\end{AMS}

\section{Introduction}\label{s:intro}$\null$

Spectral graph theory is the study of graphs via the eigenvalues of a matrix defined from the graph. Often  information about the graph  can be recovered from the spectrum of an associated matrix $M(G)$, but the existence of $M$-cospectral graphs  implies there is information that cannot be recovered from the spectrum. Nonisomorphic graphs $G$ and $H$ are said to be \emph{$M$-cospectral}  if $\spec(M(G))=\spec(M(H))$.    Constructions that switch certain edges to produce adjacency cospectral graphs and distance cospectral graphs  have been introduced by   Godsil and McKay   \cite{GM82} and Heysse  \cite{H17}. Aouchiche and Hansen give a thorough discussion of the history of $M$-cospectrality for various types of matrices $M$ defined from a graph in \cite[Section 2]{AH13}. 

This paper focuses on the distance Laplacian matrix of a graph,  which was introduced in \cite{AH13} and  is  defined from  the distance matrix in analogy with the way  the Laplacian matrix is defined from the adjacency matrix  (precise definitions of graph theory terms are provided below).
 The {\em distance matrix} $\D(G)$ 
 of a connected graph $G$ is the matrix indexed by the vertices $\{v_1,\dots,v_n\}$  of $G$ having $i,j$-entry
 equal to the {distance} between the vertices $v_i$ and $v_j$. 
For $v\in V(G)$, the {\em transmission index} of $v$ is $t(v)=\sum_{w\in V(G)} d(v,w)$.    The \emph{distance Laplacian matrix} is 
 $\DL(G)=T(G)-\D(G)$ where $T(G)=\diag(t(v_1),\dots,t(v_n))$. Distance matrices were introduced  \cite{GP71} for analysis of the loop switching problem in telecommunications, which involves finding appropriate addresses so that a message can move efficiently through a series of loops from its origin to its destination, choosing the best route at each switching point.  Recently there has been renewed interest in the loop switching problem and extensive work on distance spectra; see \cite{AH14} for a  survey.  
 The spectrum of $\DL(G)$ has been related to various graph parameters such as the  connectivity of the complement of $G$ \cite{AH13}.

In Section \ref{ss:preserve} we provide examples to show that various graph properties are not preserved by $\DL$-cospectrality. We establish some general methods for producing  $\DL$-cospectral graphs and apply one to construct an infinite family of $\DL$-cospectral bipartite pairs in Section \ref{s:construct}.  
These  constructions do not generally produce cospectral distance matrices because they rely on the fact that $\DL(G)$ is positive semidefinite with all row sums equal to zero.  
It is clear that if $T(G)$ is a scalar matrix, then there are formulas that determine the eigenvalues of $\DL(G)$ from the eigenvalues of $\D(G)$ (and vice versa). Thus, the study of the distance spectrum and the study of the distance Laplacian spectrum  are the same for transmission regular graphs; we apply this to  $\DL$-cospectral strongly regular and circulant graphs in Section  \ref{s:TR}.  In Section \ref{s:unimod} we establish that the absolute values of coefficients of the distance Laplacian characteristic polynomial are log-concave, unimodal, and in fact decreasing, i.e., $|\delta^L_{1}|\geq \dots \geq |\delta^L_{n}|$ where $|\delta^L_{k}|$ is the coefficient of $x^k$.  The remainder of this introduction contains additional definitions and terminology.

All graphs in this paper are simple (no loops or multiedges) and undirected.  For a graph $G$, the set of vertices is denoted by $V(G)$ and the set of edges (2-element subsets of vertices) is denoted by $E(G)$; an edge $\{u,v\}$ is also denoted by $uv$.  
The order of $G$ is the number $n=|V(G)|$ of vertices. The edge $e=uv$ is {\em incident} to vertices $u$ and $v$, and $u$ and $v$ are said to be {\em adjacent}. Adjacent vertices are called  {\em neighbors}, and the {\em neighborhood} of vertex $v$, $N(v)$, is the set of all neighbors of $v$. 
The {\em degree} of a vertex $v$ is $\deg v=|N(v)|$. 
If  there exists a $k$ such that $k=\deg(v)$ for all $v\in V(G)$, then $G$ is said to be {\em $k$-regular} or {\em regular}. 
 A {\em $u,v$-path} in $G$ is a list of vertices that start at $u$ and end at $v$ such that any two consecutive vertices in the list form an edge in $E(G)$ and no vertex is repeated. 
A graph is {\em connected} if for every pair of vertices $u,v$ there exists a $u,v$-path. The length of a path is one less than the number of vertices (i.e., is the number of edges), and the {\em distance} between two vertices $d(u,v)$ is the length of the shortest $u,v$-path.
A graph must be connected in order for the distance or distance Laplacian matrix to be defined. 
An {\em isomorphism} from graph $G$ to graph $H$ is a bijection $f:V(G) \to V(H)$ such that  $uv \in E(G)$ if and only if $f(u)f(v) \in E(H)$ for all $u,v\in V(G)$.

A {\em subgraph} of $G$ is a graph $H$ such that $V(H) \subseteq V(G)$ and $E(H) \subseteq E(G)$. If $G$ is a graph, $u,v\in V(G)$, and $uv\not\in E(G)$, then $G+uv$ is the graph obtained from $G$ by adding edge $uv$, i.e., $V(G+uv)=V(G)$ and $E(G+uv)=E(G)\cup\{uv\}$; $G$ is a subgraph of $G+uv$.  Given $S\subseteq V(G)$, the {\em induced subgraph} $G[S]$ is the subgraph of $G$ with vertex set $S$ and edge set consisting of all edges of $G$ that have both endpoints in $S$. 
A graph $G$ is {\em bipartite} if $V(G)=A \cup B$, $A \cap B = \emptyset$, and for every $e \in E$, exactly one vertex of $e$ is in $A$ and exactly one vertex is in $B$; in this case $A$ and $B$ are called 
{\em parts}. A graph is {\em planar} if it can be drawn in the plane without any edges crossing. 
The {\em diameter} of $G$, denoted by $\diam(G)$, is the greatest distance between any two vertices in $G$. 
A {\em cycle} is a list of vertices in  $G$ such that any two consecutive vertices in the list form an edge in $E(G)$ and no vertex is repeated except that the last vertex equals the first vertex.
The {\em girth} of  $G$ is the length of the shortest cycle in $G$. 
 A {\em dominating vertex} is a vertex adjacent to every other vertex.
A {\em cut-vertex} in a connected graph is a vertex whose deletion disconnects the graph (to {\em delete} a vertex from a graph means to delete the vertex and all edges incident with it).

The {\em adjacency  matrix} of a graph $G$ is the matrix indexed by the vertices $\{v_1,\dots,v_n\}$  of $G$ having $i,j$-entry  equal to one if $\{i,j\}$ is an edge and zero otherwise, and is denoted by $A(G)$.   
The \emph{Laplacian matrix} of $G$ is  $L(G)=\diag(\deg v_1,\dots,\deg v_n)-A(G)$.
 The eigenvalues of $\D(G)$ are called {\em distance eigenvalues} and are denoted by $\dev_1\ge  \dev_2\ge \dots\ge \dev_n$. The {\em distance spectrum} of $G$ is the multiset of distance eigenvalues of $G$ and is denoted by   $\dspec(G)$. 
 The {\em distance Laplacian eigenvalues} $\dlev_1=0\le  \dlev_2\le \dots\le \dlev_n=\rho(\DL(G)) $ are eigenvalues of $\DL(G)$, where $\rho$ denotes the spectral radius (observe that the ordering of the eigenvalues is reversed).  The {\em distance Laplacian spectrum} of $G$ is the multiset of distance Laplacian eigenvalues of $G$ and is denoted by   $\dLspec(G)$.
 For a graph $G$ of order $n$, the {\em average transmission} of $G$ is $t(G)=\frac 1 n \sum_{i=1}^n t(v_i)$. 
 We say  $G$  is {\em transmission regular} if $t(v)=t(u)$ for all $v,u\in V(G)$.  If $G$ is a transmission regular graph, then the {\em transmission} of $G$ is the average transmission of $G$, or equivalently $t(G)=t(v)$ for any vertex $v$ of $G$.



 \section{Preservation of parameters by $\DL$-cospectrality}\label{ss:preserve}
 A graph parameter $\zeta$ is {\em preserved by $\DL$-cospectrality} if $\dLspec(G)=\dLspec(H)$ implies $\zeta(G)=\zeta(H)$; note that we allow true-false as possible values of $\zeta$ in addition to numerical values.  In this section we list some  parameters that are  preserved by $\DL$-cospectrality and give examples showing others are not preserved.

 The order of a graph is preserved because it is the number of eigenvalues.
 The trace of $\DL(G)$ is preserved  because the trace of a matrix is the sum of its eigenvalues.  Therefore the {\em Wiener index}  $W(G):=\frac 1 2 \sum_{i=1}^n \sum_{j=1}^n d(v_i,v_j)=\frac 1 2 \trace(\DL(G))$ 
 and the average transmission are preserved, since $t(G)=$ $\frac 1 n \sum_{i=1}^n t(v_i)=\frac 1 n \trace(\DL(G))$.
Furthermore, Aouchiche and Hansen have shown that the number of components of the complement $\OL G$ is  preserved by $\DL$-cospectrality  \cite{AH13}.

A computer search for small $\DL$-cospectral graphs is effective for finding examples that show certain parameters are not preserved by $\DL$-cospectrality, and also for finding examples of $\DL$-cospectral graphs that illustrate a family. A list of  $\DL$-cospectral graphs of order at most ten in {\em graph6} format is presented in \cite{sagedata} and the {\em Sage} code used to find these graphs is given in \cite{sagecode}.
Verification of  the $\DL$-cospectrality of the examples that follow is given in \cite{sageverify}.

Let  $G$ be a graph of order  $n$.  If the degrees of the vertices of $G$ are $d_1\le \dots \le d_n$, then the $n$-tuple $(d_1, \dots, d_n)$ is the {\em degree sequence} of $G$.
 If the transmission indices of the vertices of $G$ are $t_1\le \dots \le t_n$, then the $n$-tuple $(t_1, \dots, t_n)$ is the {\em transmission sequence} of $G$.

\begin{figure}[h!]
\begin{center}
\scalebox{.3}{\includegraphics{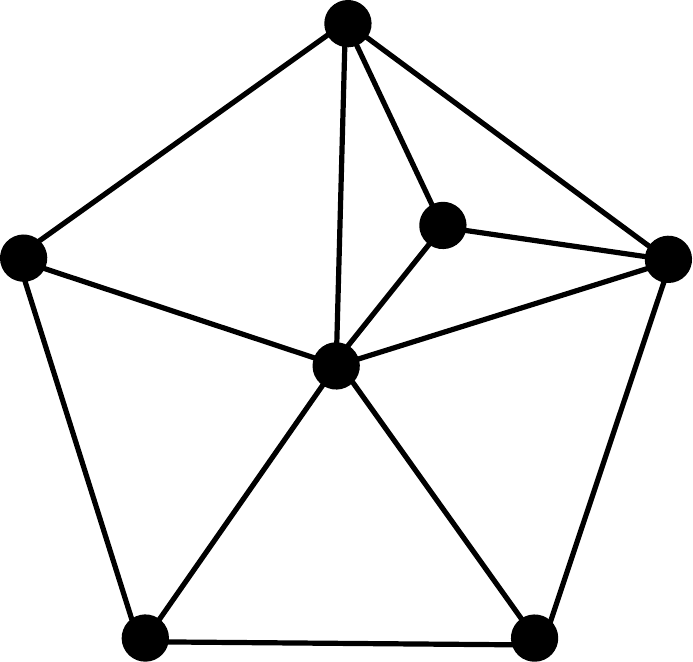}}\qquad\qquad\scalebox{.3}{\includegraphics{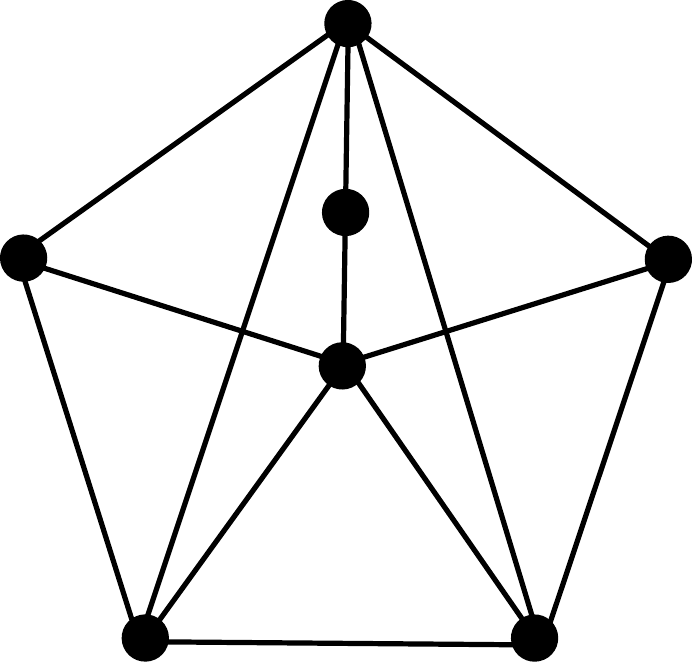}}\\
$\null$\  \ $G_1$\qquad\qquad\qquad\qquad\ \ \ $G_2$
\caption{$\DL$-cospectral graphs that have different degree and transmission sequences (see Example \ref{ex:deg-seq}).  \label{fig:deg-seq}\vspace{-10pt}}
\end{center}
\end{figure}

\begin{ex}\label{ex:deg-seq}{\rm 
 The graphs $G_1$ and $G_2$  shown in Figure \ref{fig:deg-seq} are $\DL$-cospectral because $\dLspec(G_1)=\dLspec(G_2)=\left\{0,7,10-\sqrt{2},9,10, 10+\sqrt{2},12\right\}$. Since the degree sequences   are $(3, 3, 3, 3, 4, 4, 6)$ 
  for  $G_1$ and  $(2, 3, 3, 4, 4, 5, 5)$ for  $G_2$,   the degree sequence is not preserved by $\DL$-cospectrality. Since the transmission sequences   are $(6, 8, 8, 9, 9, 9, 9)$ for  $G_1$ and  $(7, 7, 8, 8, 9, 9, 10)$ for  $G_2$,   the transmission sequence is not preserved by $\DL$-cospectrality. }\end{ex}

 \begin{figure}[h!]
\begin{center}
\scalebox{.3}{\includegraphics{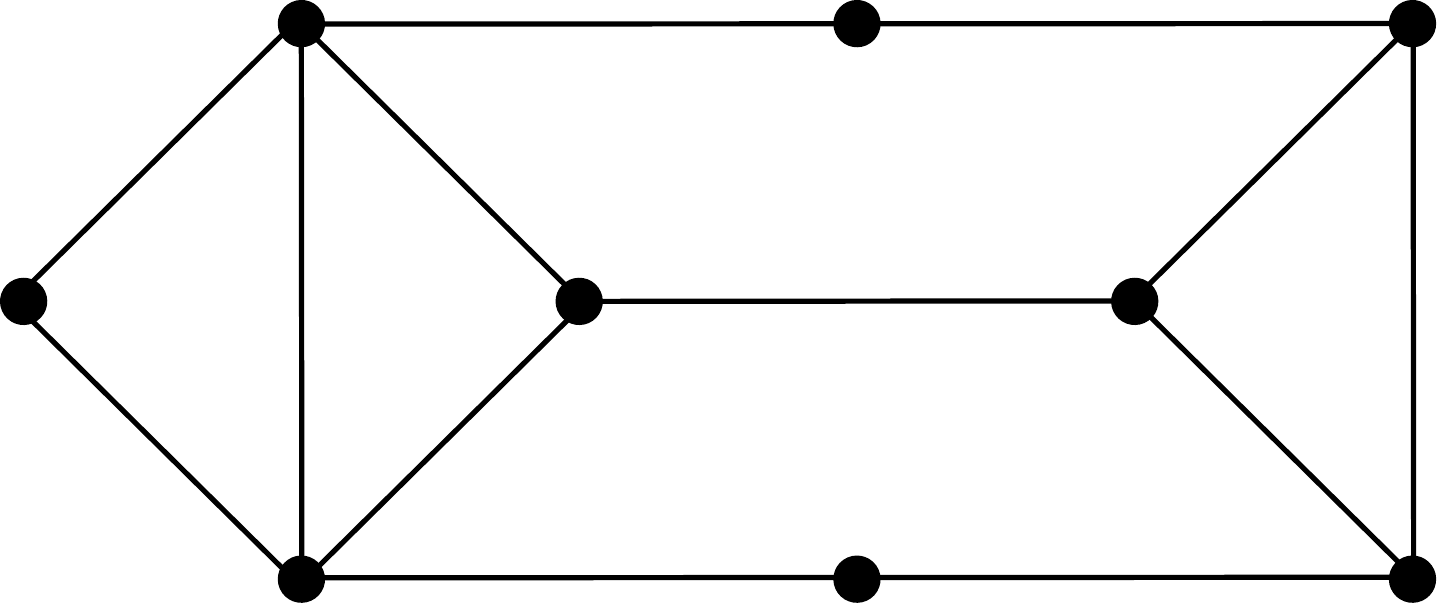}}\qquad\scalebox{.3}{\includegraphics{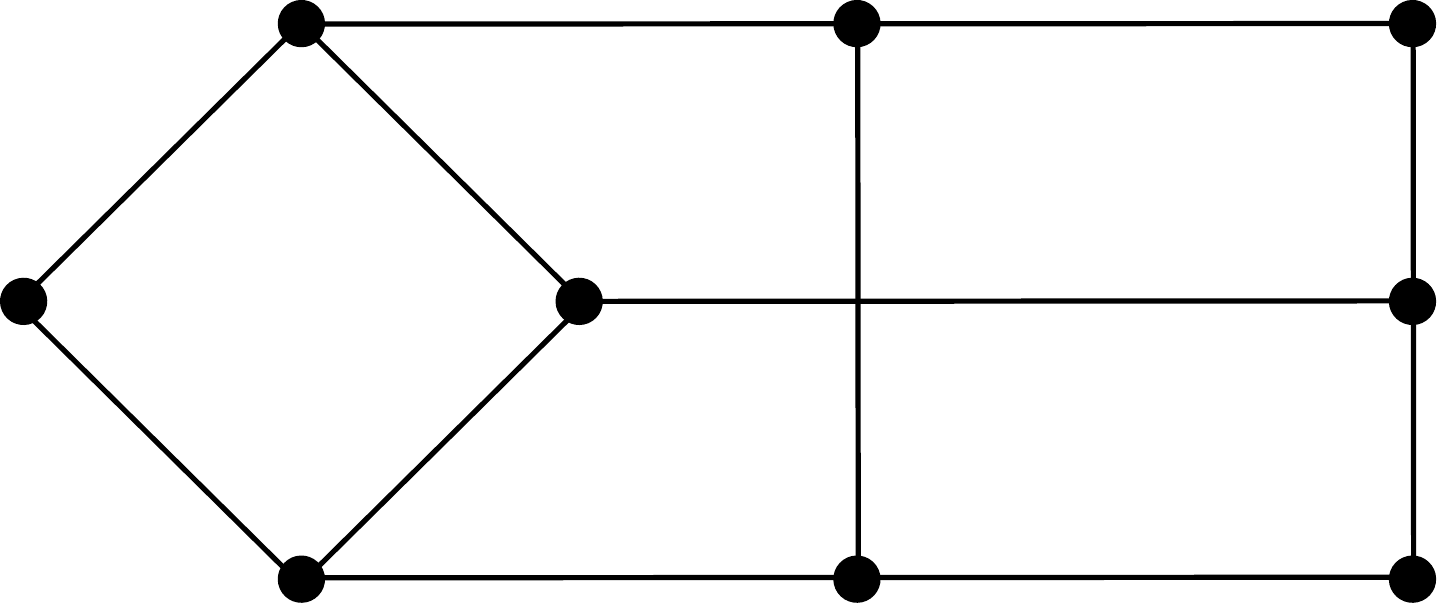}}\\
$\null$\qquad \ $G_1$\qquad\qquad\qquad\qquad\qquad\qquad$G_2$
\caption{$\DL$-cospectral graphs that have different numbers of edges, girths, and multisets of distances (see Example \ref{ex:edge-girth}).  \label{fig:edge-girth}\vspace{-16pt}}
\end{center}
\end{figure}

\begin{ex}\label{ex:edge-girth}{\rm 
 The graphs $G_1$ and $G_2$  shown in Figure \ref{fig:edge-girth} are $\DL$-cospectral because the characteristic polynomials are the same: $p_{\DL(G_1)}(x)=p_{\DL(G_2)}(x)=x^9 - 130x^8 + 7369x^7 - 237912x^6 + 4785415x^5 - 61411718x^4 +
491069091x^3 - 2237203064x^2 + 4446151812x$. Observe that   $G_1$ has 13 edges, girth 3, and 5 pairs of vertices at distance 3 from each other, whereas  $G_2$ has 12 edges, girth 4,  and 6 pairs of vertices at distance 3.  Thus, the number of edges, the girth, and the multiset of distances are not  preserved by $\DL$-cospectrality.  }\end{ex}

Figure \ref{fig:diam} presents a pair of  $\DL$-cospectral graphs showing that  $\DL$-cospectrality does not preserve the diameter. Figures  \ref{fig:leaf}, \ref{fig:dom}, \ref{fig:cut}, and \ref{fig:aut} present pairs of  $\DL$-cospectral graphs that establish  $\DL$-cospectrality does not preserve the following properties:   having a leaf, having a dominating vertex, having a cut-vertex, and having a nontrivial automorphism.

\begin{figure}[h!]
\begin{center}
\includegraphics[scale=0.3]{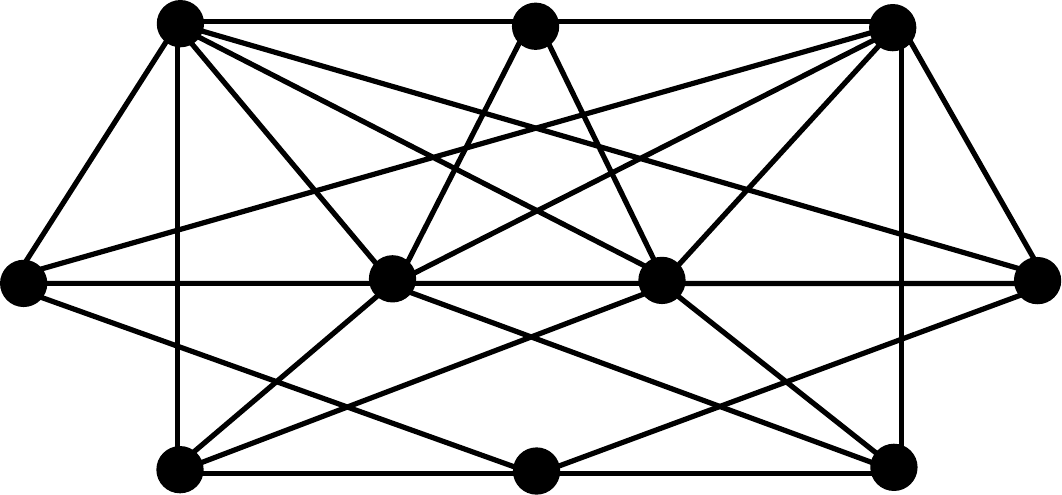}\qquad\quad
\includegraphics[scale=0.3]{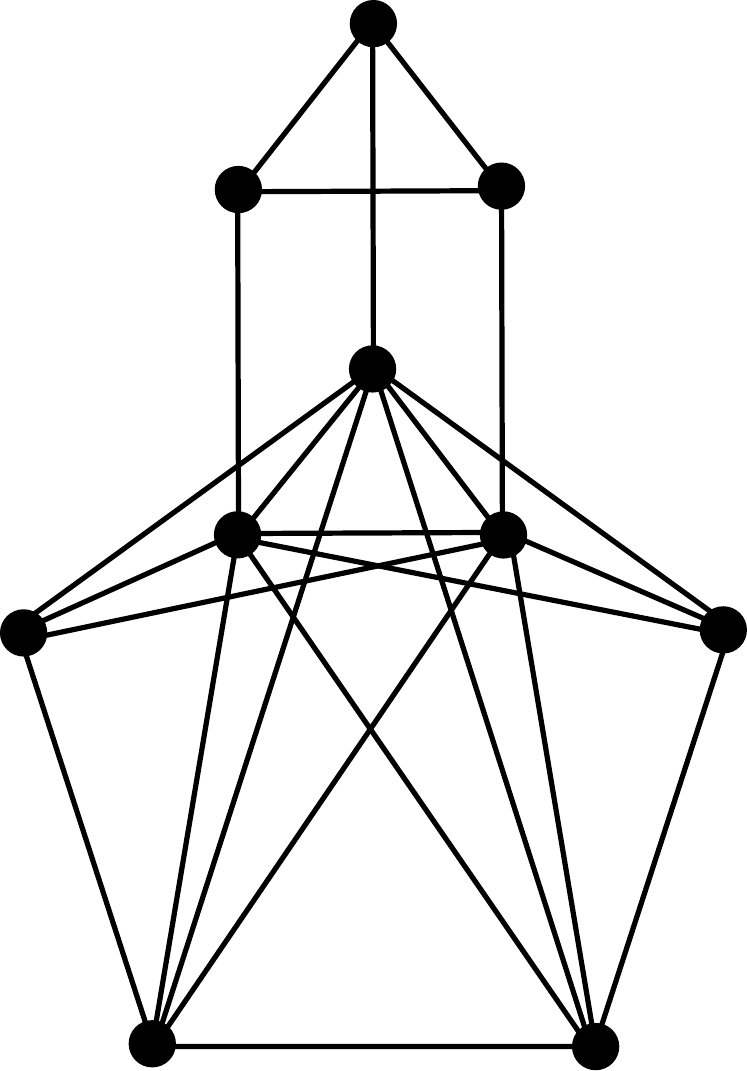}\\
$\null$ \ \ $G_1$\qquad\quad\qquad\quad\qquad\quad\ \ $G_2$
\caption{Observe that $\diam(G_1)=3$, $\diam(G_2)=2$, and $p_{\DL(G_1)}(x)=p_{\DL(G_2)}(x)=x^{10} - 132x^9 + 7720x^8 - 262558x^7 + 5722578x^6 - 82891102x^5 +
797942816x^4 - 4922528058x^3 + 17658758885x^2 - 28066657350x$. \label{fig:diam}}\vspace{-15pt}
\end{center}
\end{figure}

\begin{figure}[h!]\begin{center}
\includegraphics[scale=0.3]{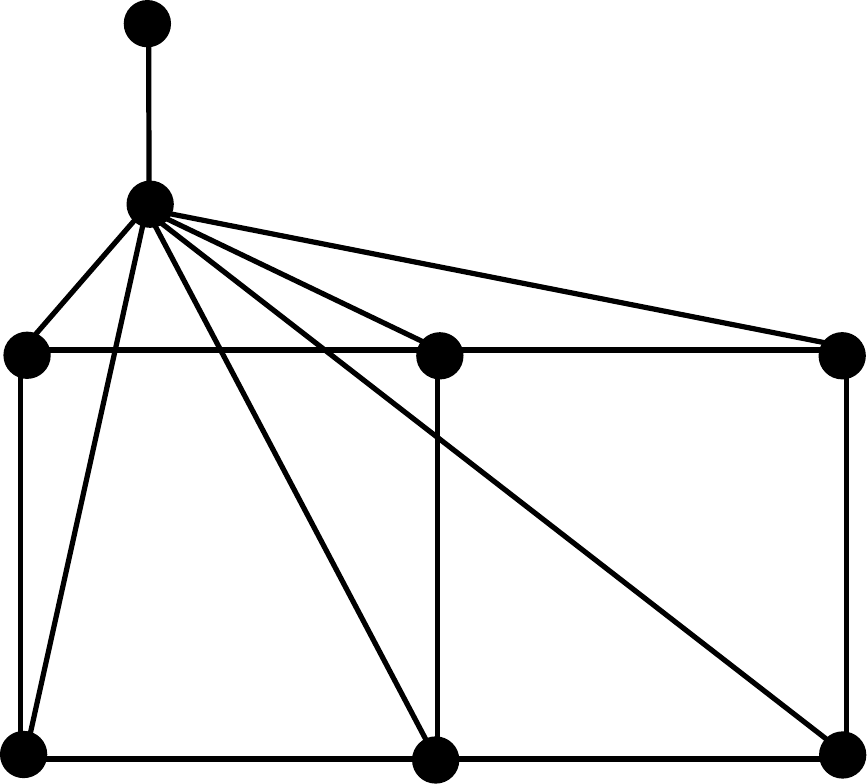}\qquad\quad\includegraphics[scale=0.3]{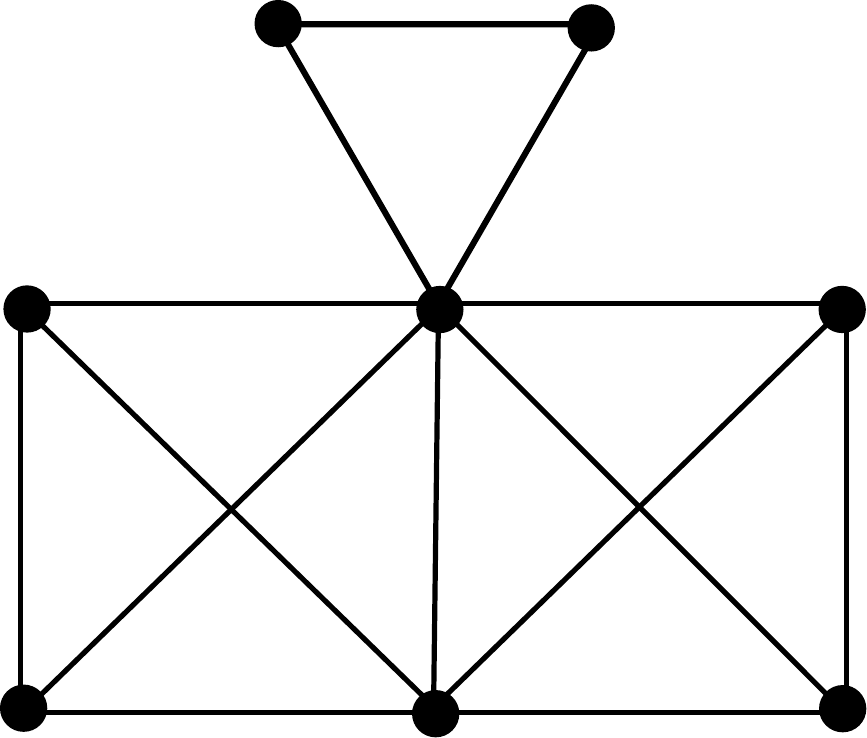}\\
$G_1$\qquad\quad\qquad\quad\qquad\quad$G_2$\\
\caption{   Observe that $\dLspec(G_1)=\dLspec(G_2)=\{0, 8, 10, 12, 12,  13, 14, 15  \}$ and $G_1$ has a leaf whereas $G_2$  does not. \label{fig:leaf}}\vspace{-15pt}
\end{center}\end{figure}

\begin{figure}[h!]\begin{center}
\includegraphics[scale=0.3]{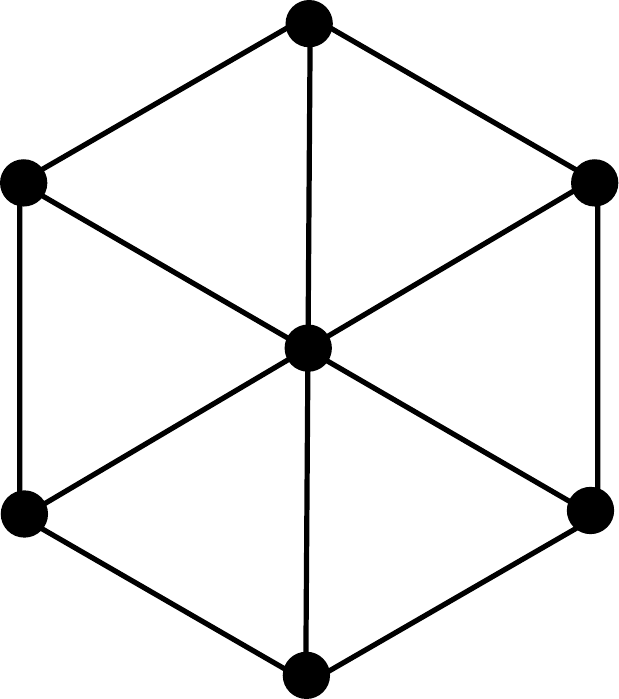}\qquad\quad\includegraphics[scale=0.3]{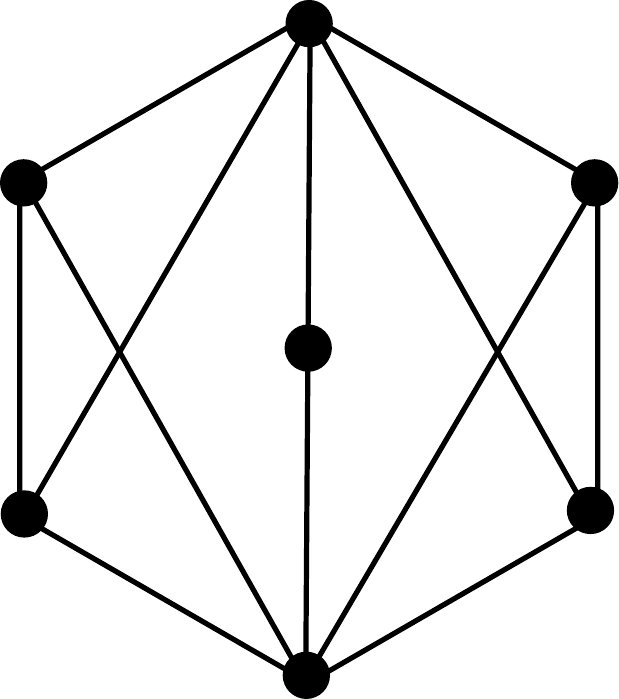}\\
$G_1$\qquad\quad\qquad\quad\qquad  $G_2$\\
\caption{     Observe that $\dLspec(G_1)=\dLspec(G_2)=\{ 0,7,9, 10, 10, 12, 12\}
$ and $G_1$ has a dominating vertex whereas $G_2$ does not. \label{fig:dom}}
\end{center}\end{figure}

\begin{figure}[h!]\begin{center}
\includegraphics[scale=0.3]{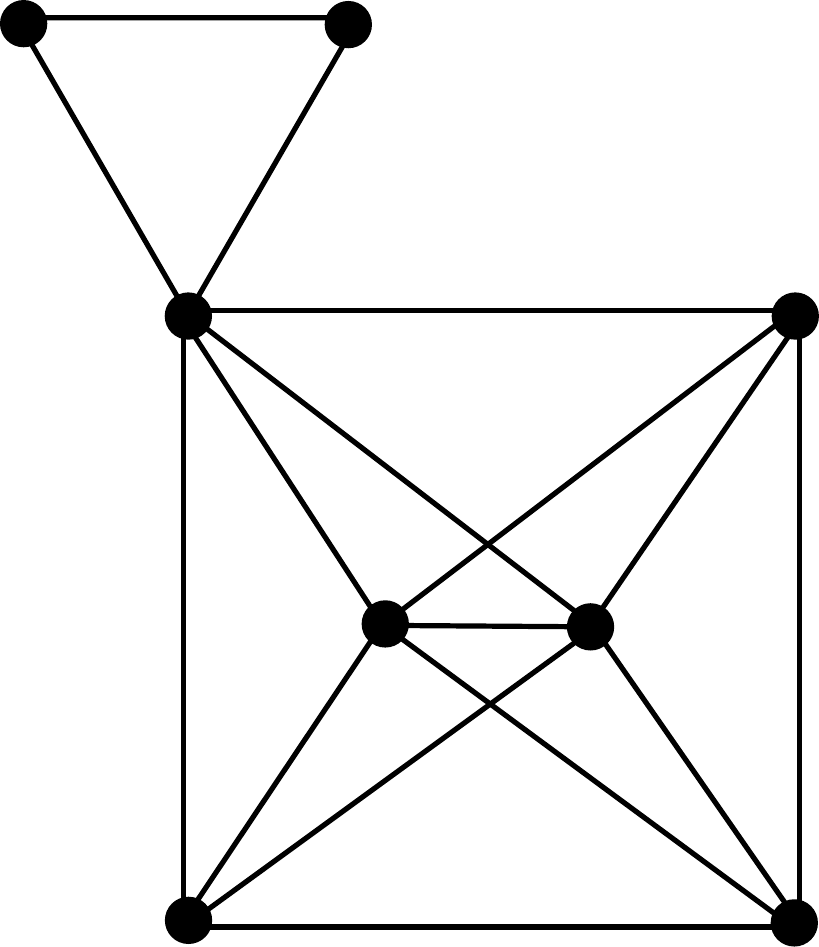}\qquad\quad\includegraphics[scale=0.3]{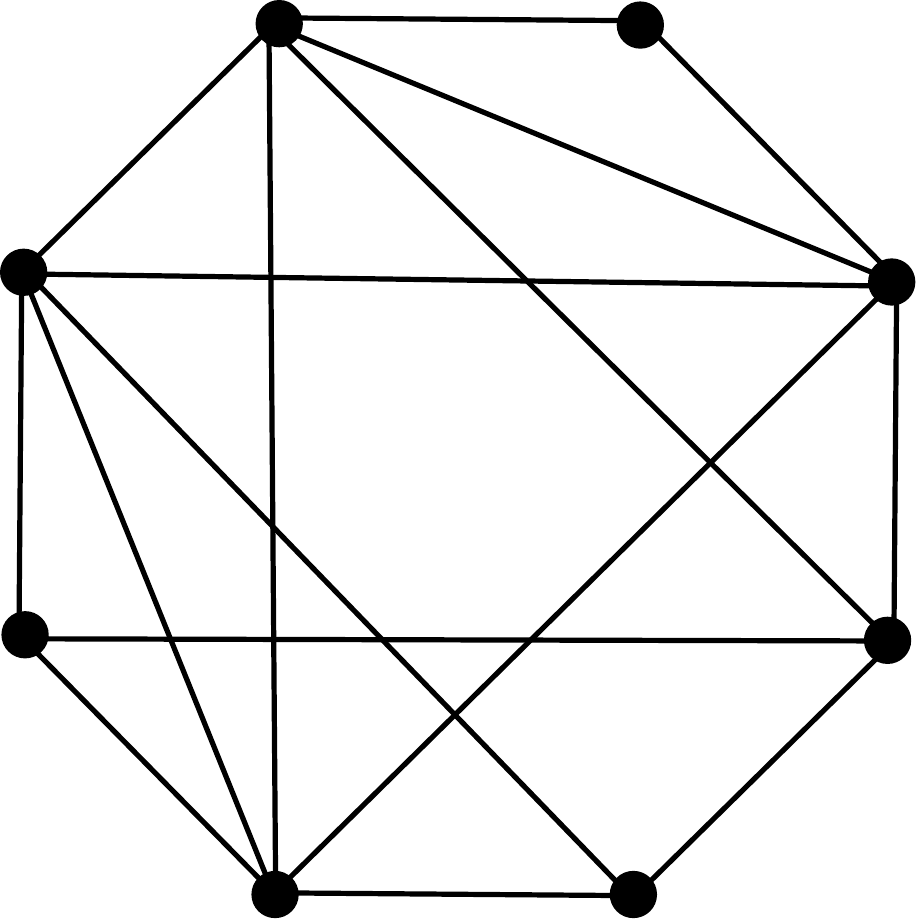}\\
$G_1$\qquad\quad\qquad\quad\qquad\quad$G_2$\\
\caption{ Observe that $p_{\DL(G_1)}(x)=p_{\DL(G_2)}(x)=x^8 - 84x^7 + 3000x^6 - 59072x^5 + 692816x^4 - 4841152x^3 +
18666240x^2 - 30643200x$ and $G_1$ has a cut vertex whereas $G_2$ does not. \label{fig:cut}}\vspace{-15pt}
\end{center}\end{figure}

\begin{figure}[h!]
\begin{center}
\includegraphics[scale=0.3]{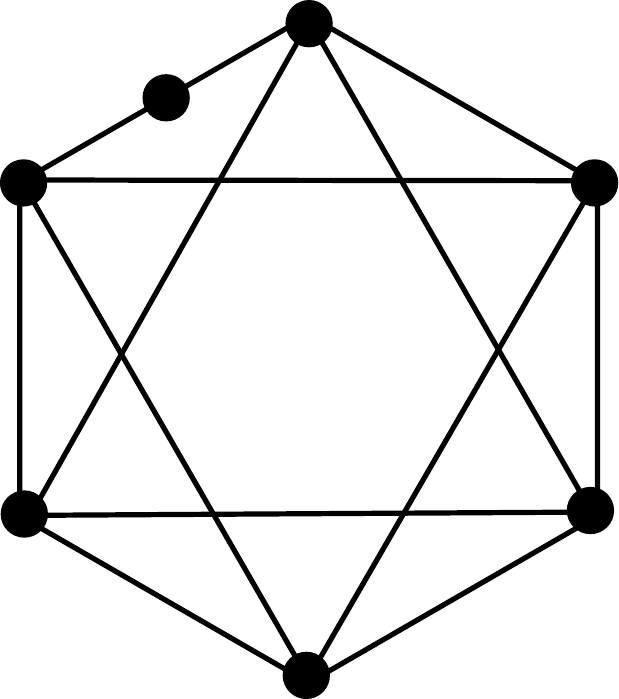}\qquad\quad
\includegraphics[scale=0.3]{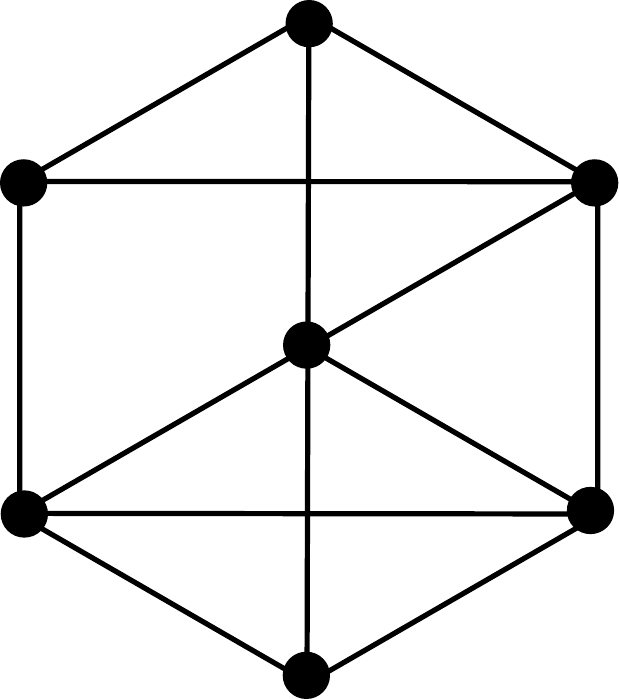}\\
$G_1$\qquad\quad\qquad\quad\qquad $G_2$
\caption{  Observe that $p_{\DL(G_1)}(x)=p_{\DL(G_2)}(x)=x^8 - 84x^7 + 3007x^6 - 59448x^5 + 700756x^4 - 4923264x^3 +
19080000x^2 - 31449600x$ and $G_1$ has symmetry, i.e., it has a nontrivial automorphism, whereas $G_2$ does not, i.e., it only has the trivial automorphism. \label{fig:aut}}\vspace{-15pt}
\end{center}
\end{figure}

A {\em clique} is a set $S\subseteq V(G)$ such that $x,y\in S$ implies $x$ and $y$ are adjacent, and an {\em independent set} is a set $S\subseteq V(G)$ such that $x,y\in S$ implies $x$ and $y$ are not adjacent.  The {\em clique number} $\omega(G)$ is the maximum cardinality of a clique of $G$, and {\em independence number} $\alpha(G)$ is the maximum cardinality of an independent  set of $G$.
Example \ref{ex:S-H24} below shows that the clique number and independence number  are not preserved by $\DL$-cospectrality.
Example \ref{Paley29} shows that the property of being a  circulant graph is not preserved (see Section \ref{sscirc} for the definition of a circulant graph). 
The bipartite cospectral pair on eight vertices shown in Figure \ref{fig:bipartite8} establishes that planarity is not preserved, because $B_{1}+v_1v_3$ is  planar but $B_{1}+v_2v_4$ is not planar. Their $\DL$-cospectrality is proved  in Theorem \ref{thm:bipart} below. 

\begin{figure}[h!]
\begin{center}
\includegraphics[scale=0.3]{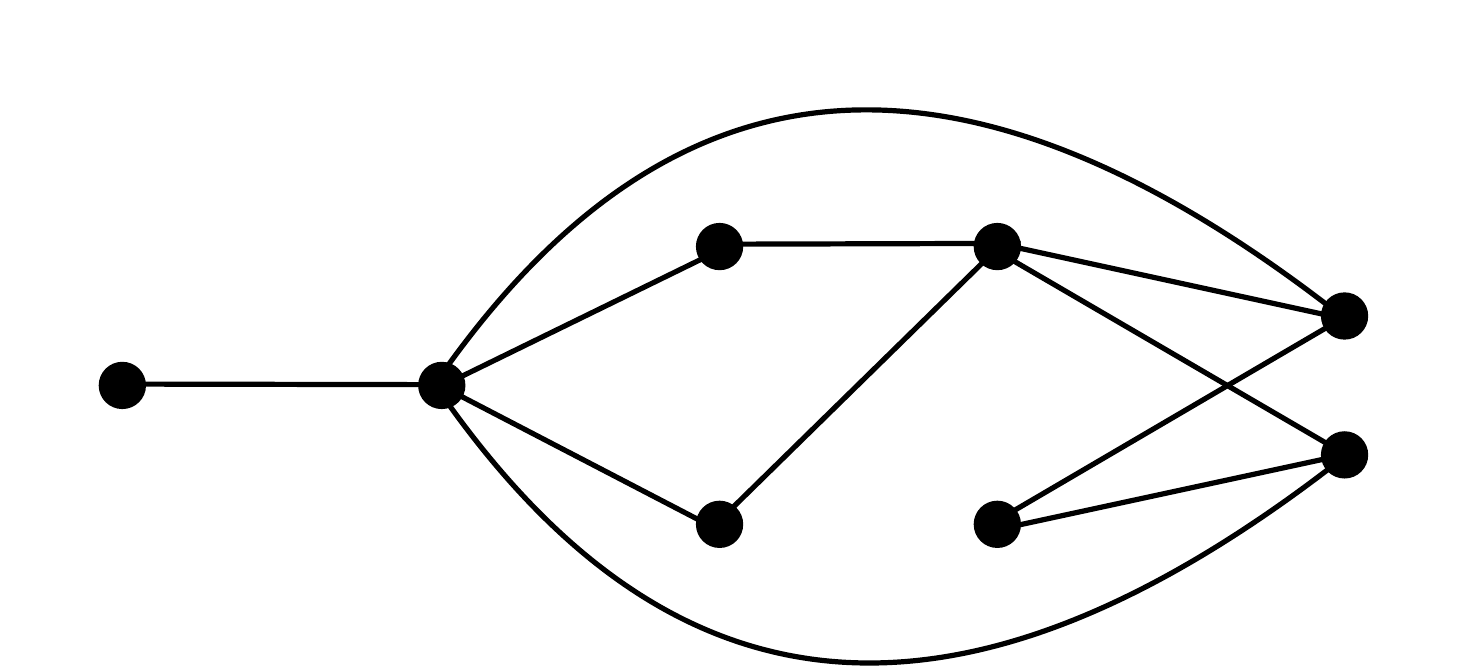}\qquad\quad
\includegraphics[scale=0.3]{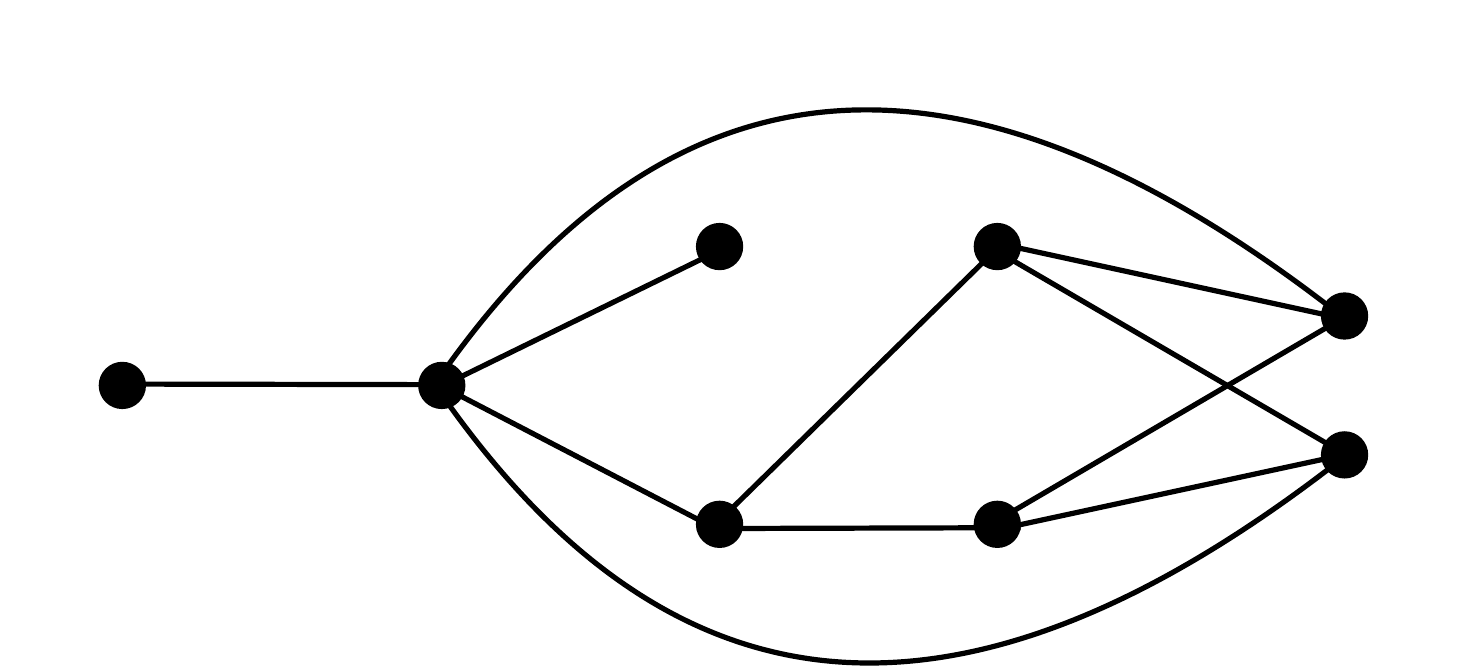}\\
$B_{1}+v_1v_3$\quad\qquad\qquad\quad\qquad\quad\qquad\quad$B_{1}+v_2v_4$
\caption{Observe that $p_{\DL(B_{1}+v_1v_3)}(x)=p_{\DL(B_{1}+v_2v_4)}(x)=x^8 - 98x^7 + 4087x^6 - 94020x^5 + 1288463x^4 - 10517842x^3 +
47349497x^2 - 90671880x
$ and $B_{1}+v_1v_3$  is planar whereas $B_{1}+v_2v_4$ is not. \label{fig:bipartite8}}\vspace{-10pt}
\end{center}
\end{figure}

\section{Constructions of $\DL$-cospectral graphs}\label{s:construct}

In this section we provide constructions of $\DL$-cospec-tral graphs that use sets of twin vertices, or sets of vertex pairs with a relaxation of the  twin structure. One of these constructions explains the only pair of cospectral bipartite graphs on eight vertices and provides a way to build large  bipartite $\DL$-cospectral graphs on an even number of vertices. 


Let $v_1, v_2$ be vertices of graph $G$ such that $N(v_1)=N(v_2)$. Such vertices are called  {\em  twins} (technically, these are {\em independent} or {\em non-adjacent} twins, but we use the term {\em twins} to mean {\em independent twins}).  Note that twins have the same transmission.  

\begin{defn}{\rm 
If $v_1$ and $v_2$ are twins, $v_3$ and $v_4$ are twins, and $t(v_1)=t(v_3)$, then we say that  $\{\{v_1, v_2\},\{ v_3, v_4\}\}$ is a set of {\em co-transmission twins.} }
\end{defn}

In \cite{NP}, the authors prove that if $v_1$ and $v_2$ are  twin vertices in graph $G$ where $\dLspec(G)$ and $\dLspec(G+v_1v_2)$ differ only by one eigenvalue of $G$ decreasing by $2$ while the others remain the same, then the changed eigenvalue of $G$ is $t(v_1)+2$. 
We establish a stronger statement that we use to prove our first construction.

 \begin{prop}\label{lem:twins} Suppose $v_1$ and $v_2$ are twins.  Then there is an orthogonal basis of eigenvectors of $\DL(G)$ that includes $[1 , -1 ,0 , \dots , 0]^T$, which is associated with  eigenvalue $t_G(v_1)+2$, and all other eigenvectors in the basis  are of the form $[a,a,y_3,\dots ,y_n]^T$ for some $a, y_i\in\R$. Furthermore, this basis of eigenvectors for $\DL(G)$ is also a  basis of eigenvectors for $\DL(G+v_1v_2)$, the eigenvalue of $\DL(G+v_1v_2)$ for eigenvector $[1 , -1 ,0 , \dots , 0]^T$  is $t_{G}(v_1)$, and the eigenvalues of $\DL(G+v_1v_2)$ for other eigenvectors are unchanged. 
\end{prop}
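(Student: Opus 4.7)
The plan is to first verify directly that $w := [1,-1,0,\dots,0]^T$ is an eigenvector of $\DL(G)$ with eigenvalue $t_G(v_1)+2$, then exploit the fact that adding the edge $v_1v_2$ perturbs $\DL$ by a very simple rank-one matrix aligned with $w$. This reduces the whole statement to a standard rank-one update argument.

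First I would use the twin hypothesis to compile three observations: (i) $d_G(v_1,v_2)=2$ since $v_1,v_2$ are non-adjacent and share the common neighbor given by $N(v_1)=N(v_2)$; (ii) $d_G(v_1,v_i)=d_G(v_2,v_i)$ for every $i\ge 3$, because a shortest $v_1$–$v_i$ walk can be converted to a $v_2$–$v_i$ walk by replacing its first vertex, and vice versa; (iii) $t_G(v_1)=t_G(v_2)$. From (i)–(iii) a one-line row-by-row computation of $\DL(G)w$ gives $(t_G(v_1)+2)w$. Since $\DL(G)$ is symmetric, it admits an orthogonal basis of eigenvectors; by applying Gram–Schmidt inside the $(t_G(v_1)+2)$-eigenspace we may take $w$ itself as one basis vector, and all remaining basis vectors are then orthogonal to $w$. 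Orthogonality to $w$ is exactly the condition $a_1=a_2$, so they have the asserted form $[a,a,y_3,\dots,y_n]^T$.

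Next I would compare the two distance Laplacians entry by entry. Adding the edge $v_1v_2$ drops $d(v_1,v_2)$ from $2$ to $1$ and leaves every other distance unchanged (this uses observation (ii) — no shortest path between other vertex pairs was routed through the new edge, since distances to $v_1,v_2$ from any other vertex are already optimal). Consequently $D(G)-D(G+v_1v_2)=E_{12}+E_{21}$, the transmissions of $v_1$ and $v_2$ each decrease by $1$ and no others change, so
\[
\DL(G)-\DL(G+v_1v_2)\;=\;\diag(1,1,0,\dots,0)-(E_{12}+E_{21})\;=\;ww^T.
\]
Thus $\DL(G+v_1v_2)=\DL(G)-ww^T$, a rank-one perturbation in the direction of $w$.

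Finally I would apply this identity to each basis vector. For any basis eigenvector $u\ne w$, we have $w^Tu=0$, hence $\DL(G+v_1v_2)u=\DL(G)u-w(w^Tu)=\DL(G)u$, so $u$ is an eigenvector of $\DL(G+v_1v_2)$ with the same eigenvalue. For $w$ itself, $\|w\|^2=2$, so $\DL(G+v_1v_2)w=(t_G(v_1)+2)w-2w=t_G(v_1)w$. There is no real obstacle here beyond cleanly recording the distance changes that justify the rank-one identity; the twin hypothesis ensures that $w^T\D(G)w$ is concentrated on the $(1,2)$ block and that the perturbation lives in the two-dimensional coordinate plane spanned by $e_1,e_2$, which is what makes the spectral bookkeeping collapse so cleanly.
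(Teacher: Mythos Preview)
Your proof is correct and follows essentially the same approach as the paper: verify directly that $w=[1,-1,0,\dots,0]^T$ is an eigenvector with eigenvalue $t_G(v_1)+2$, build an orthogonal eigenbasis containing $w$, identify the perturbation $\DL(G)-\DL(G+v_1v_2)$ as supported on the first two coordinates, and check the effect on each basis vector. Your packaging of the perturbation as the rank-one matrix $ww^T$ is a slightly cleaner way to say what the paper writes as $\left(\begin{smallmatrix}-1&1\\1&-1\end{smallmatrix}\right)\oplus O_{n-2}$, but the argument is the same.
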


\begin{proof}
Since $v_1$ and $v_2$ are twins, it follows that $(\DL(G))_{j1}=(\DL(G))_{j2}$ for all $j=3, \ldots, n$ and $t(v_1)=t(v_2)$. 
In addition, $v_1v_2 \not \in E(G)$ and there exists a path of length two between $v_1$ and $v_2$ because they have the same neighborhood. 
Therefore the $2 \times 2$ principal submatrix associated with $v_1$ and $v_2$ is of the form $\mtx{
t_G(v_1) & -2 \\
-2 & t_G(v_2)
}$. 
For $\bx=[1,-1 ,0,\dots,0]^T$, $\DL(G)\bx=[t_G(v_1)+2,-t_G(v_2)-2,0,\dots,0]^T$ because after the first two rows, all the  entries of the first two columns are equal. Since $t_G(v_1)=t_G(v_2)$, it follows that $\bx$ is an eigenvector with eigenvalue $t_G(v_1)+2$. 
Since $\DL(G)$ is a real symmetric matrix, it has a basis of  orthogonal eigenvectors.  Thus there is a basis of eigenvectors in which every eigenvector $\by\ne\bx$ is  of the form $\by=[a,a,y_3,\dots ,y_n]^T$ for some $a\in \mathbb{R}$. 

Since $v_1,v_2$ share a neighborhood, it follows that adding the edge $v_1v_2$ to $G$ changes only the distance between $v_1$ and $v_2$. Therefore \vspace{-10pt}
\[\DL(G+v_1v_2)=\DL(G)+\left(\mtx{-1 & 1 \\1 & -1}\oplus O_{n-2,n-2}\right).\vspace{-10pt}\] 
 For $\bx=[1, -1, 0 \dots, 0]^T$\!,\vspace{-10pt}
  \[\DL(G+v_1v_2)\bx=[t_G(v_1)+2, -t_G(v_1)-2, 0, \dots, 0]^T+[-2, 2, 0, \dots, 0]^T=[t_G(v_1), -t_G(v_1), 0, \dots, 0]^T\!.\vspace{-10pt}\] 
 Thus $\bx$ is an eigenvector of $G+v_1v_2$ with eigenvalue $t_G(v_1)$. 
Let $\by=[a,a,y_3,\dots ,y_n]^T$ be an eigenvector of $\DL(G)$. Since $\DL(G+v_1v_2)=\DL(G)+\left(\mtx{-1 & 1 \\1 & -1}\oplus O_{n-2,n-2}\right)$,   $\DL(G+v_1v_2) \by =\DL(G) \by$. 
Therefore the eigenvalues of $\DL(G+v_1v_2)$ associated with other eigenvectors in the orthogonal basis of eigenvectors of $\DL(G)$ are unchanged. 
\end{proof}
 

\begin{cor} \label{cor:cospecAddEdge}
Let $G$ and $H$ be $\DL$-cospectral, and let $v_1, v_2 \in V(G)$, $v_3, v_4 \in V(H)$ be such that $v_1, v_2$ are  twins, $v_3,v_4$ are  twins and $t(v_1)=t(v_3)$. Then $G+v_1v_2$ and $H+v_3v_4$ are $\DL$-cospectral.   In particular, if $G$ is a graph with a set of  co-transmission twins $\{\{v_1, v_2\},\{ v_3, v_4\}\}$, then $G+ v_1v_2$ and $G+v_3v_4$ are $\DL$-cospectral. 
\end{cor}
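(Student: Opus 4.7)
The plan is to derive Corollary \ref{cor:cospecAddEdge} as an essentially immediate consequence of Proposition \ref{lem:twins}, the key point being that the proposition describes exactly how adding an edge between twins modifies the distance Laplacian spectrum: it decreases one copy of the eigenvalue $t(v_1)+2$ to $t(v_1)$ and leaves all other eigenvalues unchanged.

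First I would apply Proposition \ref{lem:twins} to $G$ with the twin pair $v_1,v_2$. This gives an orthogonal eigenbasis for $\DL(G)$ containing the vector $[1,-1,0,\dots,0]^T$ with eigenvalue $t_G(v_1)+2$, and tells me that the same basis diagonalizes $\DL(G+v_1v_2)$, with only this one eigenvalue changing, from $t_G(v_1)+2$ to $t_G(v_1)$. Therefore, as multisets,
\[
\dLspec(G+v_1v_2) \;=\; \bigl(\dLspec(G)\setminus\{t_G(v_1)+2\}\bigr) \,\cup\, \{t_G(v_1)\}.
\]
Applying the same proposition to $H$ with the twin pair $v_3,v_4$ gives
\[
\dLspec(H+v_3v_4) \;=\; \bigl(\dLspec(H)\setminus\{t_H(v_3)+2\}\bigr) \,\cup\, \{t_H(v_3)\}.
\]

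Next I would use the two given hypotheses: $\dLspec(G)=\dLspec(H)$ (since $G$ and $H$ are $\DL$-cospectral) and $t_G(v_1)=t_H(v_3)$. Substituting these into the two displays above shows that the right-hand sides are identical multisets, hence $\dLspec(G+v_1v_2)=\dLspec(H+v_3v_4)$. The ``in particular'' statement is then the special case $H=G$ with the co-transmission twin pairs $\{v_1,v_2\}$ and $\{v_3,v_4\}$; since $G$ is trivially $\DL$-cospectral to itself and $t(v_1)=t(v_3)$ is built into the definition of co-transmission twins, the conclusion follows.

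The only subtle point worth checking is that the multiset subtraction $\dLspec(G)\setminus\{t_G(v_1)+2\}$ is well-defined, i.e., that $t_G(v_1)+2$ really does appear in $\dLspec(G)$ with positive multiplicity. This is guaranteed by Proposition \ref{lem:twins}, which exhibits $[1,-1,0,\dots,0]^T$ as an explicit eigenvector for this eigenvalue, so there is no genuine obstacle. Overall the argument is a short spectral bookkeeping calculation layered on top of Proposition \ref{lem:twins}.
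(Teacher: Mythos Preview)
Your proposal is correct and matches the paper's approach: the paper states this result as a corollary of Proposition~\ref{lem:twins} with no separate proof, and your spectral bookkeeping argument is precisely the intended immediate deduction. The only quibble is terminological---the paper reserves ``$\DL$-cospectral'' for nonisomorphic graphs, so in the ``in particular'' case you should not say $G$ is $\DL$-cospectral to itself but rather that $\dLspec(G)=\dLspec(G)$ trivially; the mathematics is unaffected.
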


 \begin{ex}{\rm The construction of $\DL$-cospectral graphs starting with a single graph and adding edges as described in Corollary \ref{cor:cospecAddEdge} is  illustrated in Figure \ref{fig:K1K2}. With $G$, $G_1=G+v_1v_2$, and $G_2=G+v_3v_4$ as shown, $v_1$ and $v_2$ are twins, $v_3$ and $v_4$ are twins,  and $t(v_1)=15=t(v_3)$.  Furthermore, $p_{\DL(G_1)}(x)=p_{\DL(G_2)}(x)=x^8 - 104x^7 + 4601x^6 - 112224x^5 + 1629571x^4 - 14083840x^3 +
67065731x^2 - 135702840x$.
}\end{ex}

\begin{figure}[h!]
\begin{center}
\includegraphics[scale=0.4]{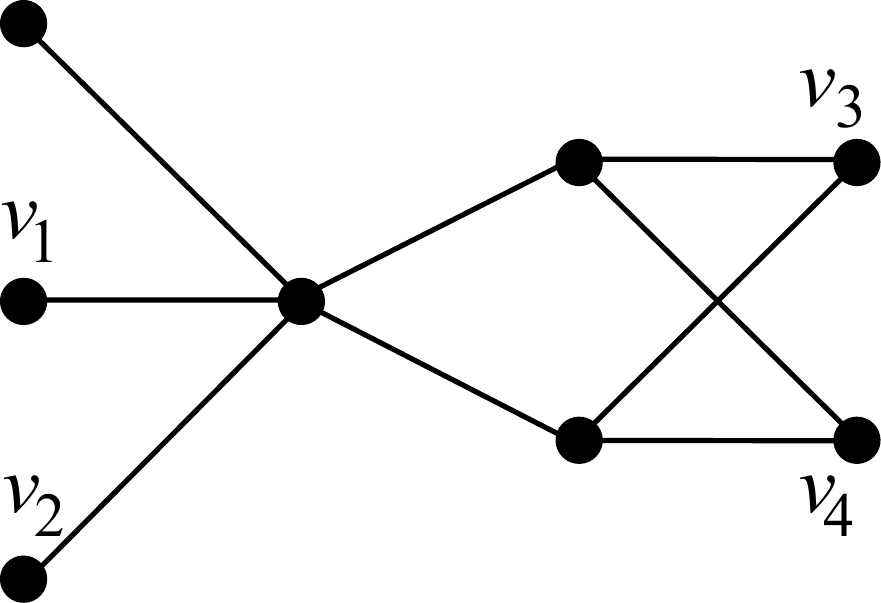}\qquad\quad
\includegraphics[scale=0.4]{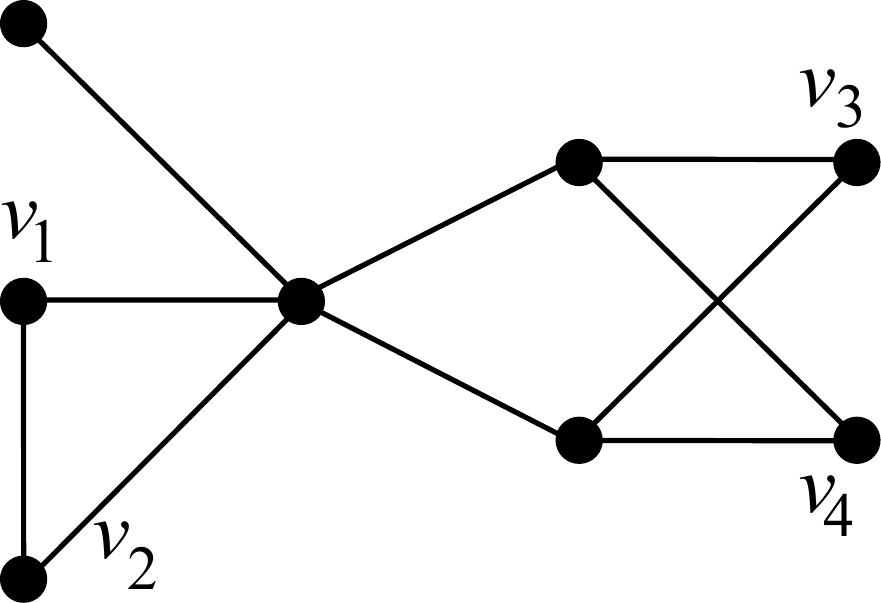}\qquad\quad
\includegraphics[scale=0.4]{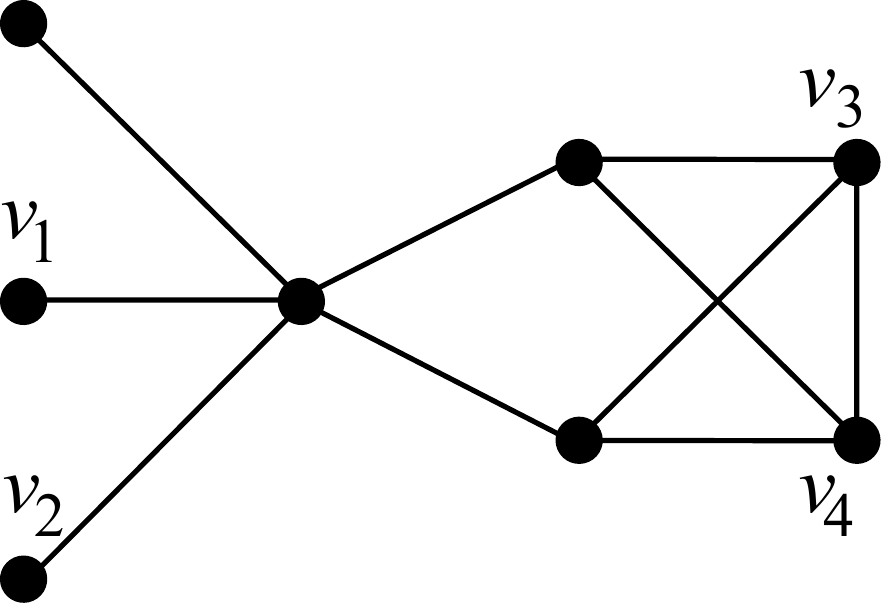}\\
$\null$\qquad$G$\qquad\qquad\qquad\qquad$G_1=G+v_1v_2$\qquad\qquad\qquad$G_2=G+v_3v_4$
\caption{The graphs $G_1=G+v_1v_2$ and $G_2=G+v_3v_4$ produced as in Corollary \ref{cor:cospecAddEdge} \label{fig:K1K2}}\vspace{-15pt}
\end{center}
\end{figure}

Next we show how to create a graph $G$ that has a set of co-transmission twins by {\em twinning} a vertex $v$ in $G$, which adds a new vertex $v'$ to $G$ such that $N(v')=N(v)$. 

\begin{prop}
Let $G$ be a graph with distinct vertices $u$ and $v$ such that $t(u)=t(v)$. Let $G'$ be the graph obtained from $G$ by  twinning  each of  $u$ and $v$ to obtain $u'$ and $v'$, respectively. Then $\{\{u,u'\},\{v,v'\}\}$ is a set of co-transmission twins of $G'$. 
\end{prop}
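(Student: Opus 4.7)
My plan is to verify directly the two requirements in the definition of a set of co-transmission twins: that $\{u,u'\}$ and $\{v,v'\}$ are each pairs of twins in $G'$, and that $t_{G'}(u)=t_{G'}(v)$. For the twin condition, I would first make the construction of $G'$ explicit: $V(G')=V(G)\cup\{u',v'\}$, and the edges of $G'$ are those of $G$ together with $\{u'w:w\in N_G(u)\}$, $\{v'w:w\in N_G(v)\}$, and the edge $u'v'$ precisely when $uv\in E(G)$ (this last convention is what sequential twinning forces, and is exactly what keeps $u,u'$ and $v,v'$ as twins after both twinning operations). Reading off the edge list gives $N_{G'}(u)=N_{G'}(u')=N_G(u)\cup(\{v'\}\text{ if }uv\in E(G))$, so $u$ and $u'$ are twins in $G'$; by the symmetric argument, $v$ and $v'$ are also twins.

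Next I would establish the expected distance identities in $G'$: distances between vertices of $V(G)$ are unchanged, a primed vertex has the same distance to any other vertex as its twin does (so $d_{G'}(u',x)=d_G(u,x)$ for $x\in V(G)\setminus\{u\}$, $d_{G'}(u',v')=d_G(u,v)$, and the analogous statements for $v'$), and $d_{G'}(u,u')=d_{G'}(v,v')=2$. The upper bounds come from lifting a shortest path in $G$ to $G'$, replacing an endpoint by its twin when necessary. The lower bounds come all at once from the projection $\pi\colon V(G')\to V(G)$ defined by $\pi|_{V(G)}=\mathrm{id}$, $\pi(u')=u$, and $\pi(v')=v$, which is a graph homomorphism onto $G$ and therefore weakly contracts distances.

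With the distance identities in hand, the transmission computation is immediate:
\[
t_{G'}(u)=d_{G'}(u,u')+d_{G'}(u,v')+\sum_{w\in V(G)\setminus\{u\}}d_{G'}(u,w)=2+d_G(u,v)+t_G(u),
\]
and the identical reasoning at $v$ gives $t_{G'}(v)=2+d_G(u,v)+t_G(v)$. The hypothesis $t_G(u)=t_G(v)$ then yields $t_{G'}(u)=t_{G'}(v)$, which together with the twin pairs established in the first paragraph verifies that $\{\{u,u'\},\{v,v'\}\}$ is a set of co-transmission twins of $G'$. The main point to be careful about is the case $uv\in E(G)$: the extra edge $u'v'$ in $G'$ might look as if it could shorten distances, but the projection argument handles this case uniformly along with the rest.
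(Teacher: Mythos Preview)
Your proof is correct and follows essentially the same approach as the paper: both compute $t_{G'}(u)=t_G(u)+d_G(u,v)+2$ (and symmetrically for $v$) by identifying the distances from $u$ to the two new vertices and noting that distances within $V(G)$ are unchanged. Your version is considerably more careful---making the construction explicit (including the edge $u'v'$ when $uv\in E(G)$), verifying that both twin relations survive the second twinning, and justifying the distance identities via the projection homomorphism $\pi$---whereas the paper simply asserts the key distance facts without proof.
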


\begin{proof} Let $d=d_G(u,v)$.  
We know that $u'$ is distance two away from $u$ and $d$ away from $v$, and $v'$ is distance $d$ away from $u$ and two away from $v$. Therefore, we have increased the transmissions of $u$ and $v$ from $G$ to $G'$ by $d+2$. By construction, $t(u')=t(u)$  and $t(v')=t(v)$. Therefore, $\{\{v, v'\}, \{u, u'\}\}$ are co-transmission twins. 
\end{proof}

Next we extend the construction of $\DL$-cospectral graphs from a set of co-transmission twins to sets of vertices called cousins that satisfy somewhat  more relaxed  conditions.

\begin{defn}{\rm 
Let $G$ be a graph of order at least five with $v_1, v_2,v_3, v_4\in V(G)$. Let $C=\{\{v_1,v_2\},$ $\{v_3,v_4\}\}$ and $U(C)=V(G) \setminus \{v_1, v_2, v_3, v_4\}$. Then
$C$ is a  {\em set of cousins} in $G$ if the following conditions are satisfied:
\begin{enumerate}
\item For all $u \in U(C)$, $d_G(u, v_1)=d_G(u, v_2)$ and $d_G(u, v_3)=d_G(u, v_4)$.
\item $\sum_{u \in U(C)} d_G(u, v_1)=\sum_{u \in U(C)} d_G(u, v_3)$.
\end{enumerate}
}\end{defn}

Note that no structure is assumed between $v_1, v_2, v_3, v_4$. In the case when $v_1v_2, v_3v_4\notin E(G)$, it follows directly from the definition that  $N(v_1) \setminus \{v_3, v_4\}=N(v_2) \setminus \{v_3, v_4\}$ and $N(v_3) \setminus \{v_1, v_2\} =N(v_4)\setminus \{v_1,v_2\}$.

\begin{prop}
Let $G$ be a graph with a set of cousins $C=\{\{v_1,v_2\},\{v_3,v_4\}\}$.  Then $N(v_1) \cap N(v_2) \neq \emptyset$ and $N(v_3) \cap N(v_4) \neq \emptyset$.
\end{prop}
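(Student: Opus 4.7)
The plan is to argue by contradiction and prove only $N(v_1)\cap N(v_2)\ne\emptyset$: since the definition of cousins is symmetric under swapping the pairs $\{v_1,v_2\}$ and $\{v_3,v_4\}$ (condition 1 is symmetric in the pair structure and condition 2 equates $\sum_{u\in U(C)} d(u,v_1)$ with $\sum_{u\in U(C)} d(u,v_3)$), the claim $N(v_3)\cap N(v_4)\ne\emptyset$ will follow from the same proof with the roles of the two pairs exchanged.

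Assume $N(v_1)\cap N(v_2)=\emptyset$. The first step is to observe that $N(v_1)\subseteq\{v_2,v_3,v_4\}$: if some $u\in U(C)$ were in $N(v_1)$, then $d(v_1,u)=1$, and condition 1 would force $d(v_2,u)=1$ as well, contradicting the emptiness assumption. The same reasoning places $N(v_2)\subseteq\{v_1,v_3,v_4\}$. Since $|V(G)|\ge 5$, the set $U(C)$ is nonempty, so I can pick any $u\in U(C)$.

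Next I examine a shortest $v_1$--$u$ path; its first vertex after $v_1$ must be some $w\in N(v_1)\subseteq\{v_2,v_3,v_4\}$. The key observation is that $w\ne v_2$: otherwise $d(v_1,u)=1+d(v_2,u)>d(v_2,u)$, contradicting condition 1. Hence $w\in\{v_3,v_4\}$, and because condition 1 gives $d(v_3,u)=d(v_4,u)$, this shortest-path length equals $1+d(v_3,u)$. The analogous analysis of a shortest $v_2$--$u$ path yields $d(v_2,u)=1+d(v_3,u)$ as well. Summing over $u\in U(C)$ and combining with condition 2 produces
\[
\sum_{u\in U(C)} d(v_1,u) \;=\; |U(C)| + \sum_{u\in U(C)} d(v_3,u) \;=\; |U(C)| + \sum_{u\in U(C)} d(v_1,u),
\]
forcing $|U(C)|=0$, which contradicts $|V(G)|\ge 5$.

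The step I expect to require the most care is the shortest-path analysis: ruling out $w=v_2$ and then upgrading the generic inequality $d(v_1,u)\le 1+d(w,u)$ to an exact equality through an element of $\{v_3,v_4\}$. Once that single-edge shortcut is in place, condition 1 collapses $d(v_3,u)$ and $d(v_4,u)$ together and condition 2 delivers the contradiction essentially for free.
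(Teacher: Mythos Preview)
Your proof is correct and uses the same core mechanism as the paper: under the assumption that one pair has no common neighbor in $U(C)$, a shortest-path analysis forces $d(v_i,u)=1+d(v_3,u)$, and summing over $U(C)$ together with condition~2 yields $|U(C)|=0$. The paper organizes things slightly differently---first observing directly from connectivity that some $u\in U(C)$ is adjacent to one of the four vertices (hence one pair has a common neighbor), then running the contradiction argument only for the remaining pair---but the substance is the same, and your symmetric treatment with the explicit exclusion of $w=v_2$ is arguably cleaner.
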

\begin{proof}
Since $|V(G)| \geq 5$, $U(C)\ne \emptyset$.  Since $G$ is connected, there must be some vertex  $u\in U(C)$ adjacent to at least one of $v_1,v_2,v_3,v_4$. Without loss of generality, suppose $u\in N(v_1)$. Since $u$ must also be the same distance from $v_2$, $u$ is also adjacent to $v_2$, and thus $N(v_1) \cap N(v_2) \neq \emptyset$. 
Suppose to the contrary that $N(v_3) \cap N(v_4) = \emptyset$, which implies $N(v_3)\cap U(C)=\emptyset$ and $N(v_4)\cap U(C)=\emptyset$. Since $G$ is connected, it follows without loss of generality that $v_3$ is adjacent to $v_1$.  
Then for all $u \in U(C)$, the shortest path between $u$ and $v_3$ uses the edge $v_1v_3$. Thus $d(u, v_3)=d(u, v_1)+1$, and $\sum_{u \in U(C)} d(u,v_3)= (n-4)+\sum_{u \in U(C)} d(u, v_1)$, which is a contradiction. 
\end{proof}

\begin{lem} \label{lem:preserveDistance}
Let $G$ be a graph  with set of cousins $C=\{\{v_1,v_2\},\{v_3,v_4\}\}$ such that $v_1v_2, v_3v_4 \not \in E(G)$. 
Then $d_{G+v_1v_2}(u, v)=d_{G}(u, v)=d_{G+v_3v_4}(u,v)$ for all $u \in U(C)$ 
and $v \in V(G)$. 
\end{lem}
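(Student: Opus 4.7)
The plan is to establish $d_{G+v_1v_2}(u,v) = d_G(u,v)$ for all $u \in U(C)$ and $v \in V(G)$; the equality $d_G(u,v)=d_{G+v_3v_4}(u,v)$ will then follow by an identical argument with the roles of the two pairs of cousins swapped. Since $G$ is a subgraph of $G+v_1v_2$, the inequality $d_{G+v_1v_2}(u,v) \le d_G(u,v)$ is automatic, so all of the content lies in proving the reverse inequality $d_{G+v_1v_2}(u,v) \geq d_G(u,v)$.

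To prove that, I would take a shortest $u,v$-path $P$ in $G+v_1v_2$; since we may choose $P$ simple, it traverses the new edge $v_1v_2$ at most once. If $P$ avoids $v_1v_2$ altogether, then $P$ is already a path in $G$ and its length is at least $d_G(u,v)$, as desired. Otherwise $P$ decomposes as a $u,v_i$-path in $G$, followed by the edge $v_1v_2$, followed by a $v_j,v$-path in $G$, where $\{i,j\}=\{1,2\}$. Without loss of generality take $i=1$ and $j=2$, so the length of $P$ is at least $d_G(u,v_1)+1+d_G(v_2,v)$.

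This is where the cousin hypothesis enters. Condition~(1) in the definition of a set of cousins gives $d_G(u,v_1)=d_G(u,v_2)$ for every $u \in U(C)$, so
\[
|P| \;\geq\; d_G(u,v_1)+1+d_G(v_2,v)\;=\;d_G(u,v_2)+1+d_G(v_2,v)\;\geq\; d_G(u,v)+1,
\]
the last inequality being the triangle inequality in $G$. In either case $|P|\geq d_G(u,v)$, and the proof is complete after invoking the analogous argument for $v_3v_4$.

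The argument is short because, once the cousin condition is in place, the triangle inequality carries all of the weight; I do not anticipate a serious obstacle. The only mild subtlety is restricting to simple shortest paths so that the new edge is used at most once; any nonsimple use would only make $P$ longer. Note also that condition~(2) of the cousin definition is not needed here — it will presumably be invoked only later, when this distance-preservation lemma is combined with a transmission comparison to obtain $\DL$-cospectrality of $G+v_1v_2$ and $G+v_3v_4$.
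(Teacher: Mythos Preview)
Your argument is correct, and in fact cleaner than the paper's. The paper proves the same statement by a local case analysis: assuming a shortest $u,v$-path $\PP$ in $G+v_1v_2$ traverses $v_1v_2$ (say with $v_1$ first), it looks at the immediate predecessor $w$ of $v_1$. If $w\in U(C)$ then $d_G(w,v_1)=d_G(w,v_2)=1$, so $(w,v_1,v_2)$ can be shortened to $(w,v_2)$. Otherwise $w\in\{v_3,v_4\}$, and the paper steps back one more vertex to find $u'\in U(C)$ with $d_G(u',v_1)=2$, whence $d_G(u',v_2)=2$ furnishes a two-step detour replacing $(u',v_3,v_1,v_2)$. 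Your approach bypasses this case split entirely: you bound the whole $u,v_1$-segment at once by $d_G(u,v_1)=d_G(u,v_2)$ and finish with the triangle inequality, obtaining $|P|\ge d_G(u,v)+1$ directly. The paper's route is more constructive (it explicitly rewires the path inside $G$), while yours is shorter and uses only the metric structure; both yield the same slightly stronger conclusion that no shortest path from $U(C)$ ever uses the new edge. Your remark that condition~(2) is unused here is also accurate.
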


{\begin{proof} 
Let $u \in U(C)$ and $v \in V(G)$.
If a shortest path between $u$ and $v$ in $G+v_1v_2$ does not contain edge $v_1v_2$  and a shortest path between $u$ and $v$ in $G+v_3v_4$ does not contain edge $v_3v_4$,
then the distance between $u$ and $v$ is the same in $G$, $G+v_1v_2$, and $G+v_3v_4$. 
We show that if a shortest path $\PP$ from $u$ to $v$ in $G+v_1v_2$ contains $v_1v_2$, then there is a shorter path   between $u$ and $v$ in $G+v_1v_2$, contradicting the choice of $\PP$.  The argument for  $G+v_3v_4$ is similar.
 
Suppose a shortest  path $\PP$ from $u$ to $v$ in $G+v_1v_2$ contains the edge $v_1v_2$; without loss of generality, $v_1$ precedes $v_2$ in the path order starting from $u$. 
Since $u \in U(C)$, there is a predecessor $w$ of $v_1$ in $\PP$, i.e. $\PP$ contains the subpath $(w,v_1,v_2)$.  
If $w\in U(C)$, then $w \in N(v_1) \cap N(v_2)$ and so replacing $(w,v_1,v_2)$ by $(w,v_2)$ produces a shorter subpath. 
So $w$ is one of $v_3$ or $v_4$, without loss of generality let $w=v_3$. 
Since $v_3\notin U(C)$,  $\PP$ contains the subpath $(u',v_3,v_1,v_2)$ for some $u'\in V(G)$.  Since $v_3v_4\notin E(G+v_1v_2)$, $u'\ne v_4$.  
Thus $u'\in U(C)$.  If $u'v_1\in E(G)$ 
there would be a shorter path between $u$ and $v$ obtained by replacing $(u', v_3, v_1)$ by $(u',v_1)$ 
in $\PP$.  Since this would contradict the choice of $\PP$, $u'v_1\notin E(G)$.  Thus $d_G(u',v_1)=2$, which implies $d_G(u',v_2)=2$ by the definition of a set of cousins.  So there exists $x\in N(u')\cap N(v_2)$.  Then  there is a shorter path between $u$ and $v$ obtained by replacing $(u', v_3, v_1, v_2)$ by $(u',x,v_2)$. 
\end{proof}}

\begin{lem} \label{lem:isomorphism}
Let $H$ be a graph with $V(H) = \{u_1, u_2, u_3, u_4\}$ such that $u_1u_2, u_3u_4 \not \in E(H)$ and suppose  $H + u_1u_2 $ is isomorphic to $ H + u_3u_4$. Then at least one of  the permutations $\sigma_1=(u_1u_4)(u_2u_3)$ or $\sigma_2=(u_1u_3)(u_2u_4)=(u_1u_2)\sigma_1(u_1u_2)$ in the symmetric group on $V(H)$ is an isomorphism of $H+u_1u_2$ and $H+u_3u_4$. 
\end{lem}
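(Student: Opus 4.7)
The plan is to parametrize $E(H)$ by its intersection with $\{u_1u_3,u_1u_4,u_2u_3,u_2u_4\}$, since the hypotheses $u_1u_2,u_3u_4\notin E(H)$ force every edge of $H$ to lie in this set. Write $a,b,c,d\in\{0,1\}$ for the indicators of $u_1u_3, u_1u_4, u_2u_3, u_2u_4$ respectively, so that $H$ is completely determined by $(a,b,c,d)$.

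First I would directly compute the action of $\sigma_1=(u_1u_4)(u_2u_3)$ and $\sigma_2=(u_1u_3)(u_2u_4)$ on each of the six unordered vertex pairs. In both cases, $u_1u_2\mapsto u_3u_4$, which is exactly what one needs to send the extra edge of $H+u_1u_2$ to the extra edge of $H+u_3u_4$. Under $\sigma_1$ the pairs $u_1u_4$ and $u_2u_3$ are fixed setwise, while $u_1u_3$ swaps with $u_2u_4$; therefore $\sigma_1$ maps $E(H+u_1u_2)$ onto $E(H+u_3u_4)$ if and only if $a=d$. Symmetrically, $\sigma_2$ fixes $u_1u_3$ and $u_2u_4$ setwise and swaps $u_1u_4$ with $u_2u_3$, so $\sigma_2$ is an isomorphism if and only if $b=c$.

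Next I would argue by contradiction: suppose $H+u_1u_2\cong H+u_3u_4$ yet neither $\sigma_1$ nor $\sigma_2$ is an isomorphism, so $a\neq d$ and $b\neq c$. Because each of $a,b,c,d$ is $0$ or $1$, this forces $|E(H)|=2$, with exactly one edge from $\{u_1u_3,u_2u_4\}$ and exactly one from $\{u_1u_4,u_2u_3\}$. A quick enumeration of the four such choices shows that in every case the two edges share a common endpoint, so $H$ is a path on three vertices together with one isolated vertex, and the center of that path (equivalently, the isolated vertex) lies either in $\{u_1,u_2\}$ or in $\{u_3,u_4\}$.

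Finally I would derive the contradiction by comparing degree sequences. If, say, the center of the $P_3$ is $u_1$ and the isolated vertex is $u_2$, then $H+u_1u_2$ is obtained by joining the isolated vertex to the center, yielding $K_{1,3}$ with degree sequence $(1,1,1,3)$, whereas $H+u_3u_4$ adds an edge between two leaves, yielding a triangle plus an isolated vertex with degree sequence $(0,2,2,2)$. The three other sub-cases are completely symmetric. Since these degree sequences differ, $H+u_1u_2\not\cong H+u_3u_4$, contradicting the hypothesis. The proof does not present a real obstacle; the only subtlety is to be systematic in the case analysis and to check at the outset that $\sigma_1,\sigma_2$ are precisely the two double transpositions swapping the pairs $\{u_1,u_2\}$ and $\{u_3,u_4\}$, so that ruling them out really forces the configurations above.
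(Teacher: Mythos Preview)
Your proof is correct and genuinely different from the paper's. The paper simply records that the lemma ``was verified by exhaustively checking every labeling of every graph on four vertices using \emph{Sage}''; no mathematical argument is given. Your approach replaces this computer check with an elementary case analysis: after reducing to the four indicators $(a,b,c,d)$ and observing that $\sigma_1$ works iff $a=d$ and $\sigma_2$ works iff $b=c$, the failure of both forces $H$ to be $P_3\cup K_1$ with the center and the isolated vertex forming either the pair $\{u_1,u_2\}$ or the pair $\{u_3,u_4\}$, whence $H+u_1u_2$ and $H+u_3u_4$ have different degree sequences ($K_{1,3}$ versus $K_3\cup K_1$). This buys a self-contained human-checkable proof and makes transparent \emph{why} the two double transpositions suffice, at the cost of a short but explicit enumeration; the paper's route is shorter to state but relies on external verification.
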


\begin{proof}
This lemma was verified by exhaustively checking every labeling of every graph on four vertices using {\em Sage} \cite{sageverify}. \end{proof}


\begin{thm} \label{thm:Cospectral Cousins}
Let $G$ be a graph with a set of cousins $C=\{\{v_1,v_2\},\{v_3,v_4\}\}$ satisfying the following conditions:
\bit
\item Vertices $v_1, v_2$ are not adjacent and $v_3, v_4$ are not adjacent.
\item The subgraph of $G+v_1v_2$ induced by  $\{v_1,v_2,v_3,v_4\}$ is isomorphic to the subgraph of $G+v_3v_4$  induced by $\{v_1,v_2,v_3,v_4\}$.  
\eit
If $G_1=G+v_1v_2$ and $G_2=G+v_3v_4$  are not isomorphic, then they are  $\DL$-cospectral.
\end{thm}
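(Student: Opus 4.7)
My plan is to exhibit an orthogonal matrix $U \in O(n)$ with $U \DL(G_1) U^T = \DL(G_2)$, which immediately gives cospectrality. The guiding observation is that both $G_1$ and $G_2$ should admit the common automorphism $\tau = (v_1 v_2)(v_3 v_4)$ that fixes $U(C)$ pointwise; such a $\tau$ would commute with both distance Laplacians and decompose $\R^n$ into two invariant eigenspaces of $P_\tau$ on which $U$ can be assembled block by block.

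First, I would apply Lemma \ref{lem:isomorphism} and, after possibly relabelling $v_3 \leftrightarrow v_4$, assume that $\sigma_1 = (v_1 v_4)(v_2 v_3)$ is the isomorphism from $H + v_1 v_2$ to $H + v_3 v_4$.  Matching edges under $\sigma_1$ inside $H$ yields the equivalences $v_1 v_3 \in E(G) \iff v_2 v_4 \in E(G)$ and $v_1 v_4 \in E(G) \iff v_2 v_3 \in E(G)$.  Combined with the cousin-forced equalities $N(v_1) \cap U(C) = N(v_2) \cap U(C)$ and $N(v_3) \cap U(C) = N(v_4) \cap U(C)$, an edge-by-edge check then shows $\tau$ is an automorphism of $G$, and since $\tau$ fixes each of $v_1 v_2$ and $v_3 v_4$ setwise, it is also an automorphism of both $G_1$ and $G_2$.

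Given $\tau \in \mathrm{Aut}(G_i)$, $P_\tau$ commutes with $\DL(G_i)$, and so $\R^n = V_+ \oplus V_-$---with $V_- = \mathrm{span}\{(e_{v_1}-e_{v_2})/\sqrt{2},\, (e_{v_3}-e_{v_4})/\sqrt{2}\}$ of dimension $2$ and $V_+ = V_-^\perp$ of dimension $n-2$---are both invariant under $\DL(G_1)$ and $\DL(G_2)$.  In the natural basis $\{(e_{v_1}+e_{v_2})/\sqrt{2},\, (e_{v_3}+e_{v_4})/\sqrt{2}\} \cup \{e_u : u \in U(C)\}$ of $V_+$, I would verify that the two restrictions $\DL(G_i)|_{V_+}$ coincide as matrices: Lemma \ref{lem:preserveDistance} handles every entry touching $U(C)$, and the remaining $2 \times 2$ block depends only on the four cross-distances $d_{G_i}(v_j, v_k)$ with $\{j, k\} \in \{\{1,3\},\{1,4\},\{2,3\},\{2,4\}\}$.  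A routine case analysis of paths shows that each such cross-distance equals $\min(d_G(v_j, v_k),\, 1 + d_G(v_{\bar\jmath}, v_k))$, where $v_{\bar\jmath}$ is the cousin partner of $v_j$; since this expression is the same whether the added edge is $v_1 v_2$ or $v_3 v_4$, the cross-distances agree in $G_1$ and $G_2$.

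Finally, the $2 \times 2$ matrices $M_i := \DL(G_i)|_{V_-}$ in the basis $\{\psi_1, \psi_2\}$ will have identical off-diagonal entries but swapped diagonal entries, so they are permutation similar via the transposition $\psi_1 \leftrightarrow \psi_2$ and hence cospectral.  The crucial input is the identity $d_G(v_1, v_2) = d_G(v_3, v_4) = 2$, which follows from the preceding proposition (each cousin pair has a common neighbour) together with $v_1 v_2, v_3 v_4 \notin E(G)$; this in turn forces $d_{G_1}(v_3, v_4) = d_{G_2}(v_1, v_2) = 2$, since every alternative path using the newly added edge has length at least $3$.  Taking $U$ to act as the identity on $V_+$ and as the transposition $(\psi_1\;\psi_2)$ on $V_-$ then yields $U \DL(G_1) U^T = \DL(G_2)$.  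The main obstacle will be the distance bookkeeping in the third paragraph: one must verify that every cross-distance inside $\{v_1,v_2,v_3,v_4\}$ is unchanged when one switches between the two added edges, and this is exactly where both the cousins condition and the induced-subgraph isomorphism hypothesis are used in an essential way.
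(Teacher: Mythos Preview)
There is a genuine gap: the claim that $\tau=(v_1v_2)(v_3v_4)$ is an automorphism of $G$ fails in general. From $\sigma_1=(v_1v_4)(v_2v_3)$ being an isomorphism of the two induced subgraphs you correctly obtain $v_1v_3\in E(G)\iff v_2v_4\in E(G)$, but $\sigma_1$ \emph{fixes} each of the potential edges $v_1v_4$ and $v_2v_3$, so it yields no relation between them; the asserted equivalence $v_1v_4\in E(G)\iff v_2v_3\in E(G)$ simply does not follow. Concretely, if $G[\{v_1,\dots,v_4\}]$ consists of the single edge $v_1v_4$, then $\sigma_1$ is a valid isomorphism of $(G+v_1v_2)[\{v_1,\dots,v_4\}]$ onto $(G+v_3v_4)[\{v_1,\dots,v_4\}]$ (both are $P_3$ together with an isolated vertex), yet $\tau(v_1v_4)=v_2v_3\notin E(G)$. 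In such a case $d_{G_1}(v_1,v_4)=1\neq d_{G_1}(v_2,v_3)$, so $P_\tau$ does not commute with $\DL(G_1)$, the subspaces $V_\pm$ are not $\DL(G_i)$-invariant, and the ``restriction'' argument breaks down: the off-diagonal $V_+$--$V_-$ block is nonzero and must be tracked. Your cross-distance formula has a related error: in $G_2=G+v_3v_4$ the shortcut through the new edge uses the cousin partner of whichever endpoint lies in $\{v_3,v_4\}$, not always the partner of $v_j$, so the expression is \emph{not} the same for the two added edges and the cross-distances do not all agree between $G_1$ and $G_2$.

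Your conjugating matrix $U$ is in fact the right one---it coincides with the paper's $\mathcal S_1=S_1\oplus I_{n-4}$---but the paper verifies $\mathcal S_1\DL(G_1)\mathcal S_1=\DL(G_2)$ without any invariance claim. It writes $M_1=\DL(G_1)[\{1,2,3,4\}]$ with generic cross-distances $c,d,f,h$, uses the permutation $\sigma_1$ to express $M_2$ in the same parameters (so that, for instance, $d_{G_2}(v_1,v_3)=h=d_{G_1}(v_2,v_4)$ rather than $c$), and then checks $S_1M_1S_1=M_2$ together with $S_1Q=Q$ by a direct symbolic computation. To salvage your line of argument you would have to compute the full $V_+\oplus V_-$ block decomposition of each $\DL(G_i)$, including the nonzero off-diagonal block, and verify directly that the swap on $V_-$ carries one to the other.
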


\begin{proof}
By Lemma \ref{lem:preserveDistance}, we know that the entries of $\DL(G_1)$, $\DL(G_2)$, and $\DL(G)$ are equal everywhere except the $4 \times 4$ principal submatrix corresponding to $v_1, v_2, v_3, v_4$. 
Let  $M_1=\DL(G_1)[\{1,2,3,4\}]$ and $M_2=\DL(G_2)[\{1,2,3,4\}]$.  Then we can partition $\DL(G_1)$ and $\DL(G_2)$ as block matrices: \beq\label{eq:mtx-part-3.9}
\DL(G_1)= \left[\begin{array}{c c}
M_1 & Q \\
Q^T & B
\end{array}\right] \text{ and } \DL(G_2)= \left[\begin{array}{cc}
M_2 & Q \\
Q^T & B
\end{array}\right]\eeq where $Q$ is a $4 \x (n-4)$ matrix and $B$ is an $(n-4) \x (n-4)$ matrix. 

From the definition of a set of cousins,  
$\sum_{u \in U(C)} d(u, v_1)=\sum_{u \in U(C)} d(u, v_2)=\sum_{u \in U(C)} d(u, v_3)=\sum_{u \in U(C)} d(u, v_4)$. Thus the row sums of $Q$ are constant. Since every row sum of a distance Laplacian matrix is equal to zero, it follows that the row sums of $M_1$ are constant, the row sums of  $M_2$ are constant, and these constants are equal; denote the constant row sum of $M_i, i=1,2$ by $a$.  
Thus $M_1$ has the form  \[M_1=\left[ \begin{array}{c c c c}
a+1+c+d & -1 & -c & -d \\
-1 & a+1+f+h &-f & -h \\
-c & -f & a+c+f+2 & -2 \\
-d & -h & -2 & a+d+2+h
\end{array}\right]\] for some $c, d, f, h \in \mathbb{Z}$. 
The induced subgraph  $G_1[\{v_1,v_2,v_3,v_4\}]$ is isomorphic to the  induced subgraph of $G_2[\{v_1,v_2,v_3,v_4\}]$ by hypothesis. 
 By Lemma \ref{lem:isomorphism}, we can choose the isomorphism  to be either $\sigma_1$ or $\sigma_2$. So $M_2$ has one  of the forms 
\[M_2^{\sigma_1}=\left[ \begin{array}{cccc}
a+d+h+2 & -2 & -h & -d \\
-2 & a+c+f+2 &-f & -c \\
-h & -f & a+1+f+h & -1 \\
-d & -c & -1 & a+1+c+d
\end{array}\right] \] or \[M_2^{\sigma_2}=
\left[ \begin{array}{cccc}
a+d+h+2 & -2 & -c & -f \\
-2 & a+c+f+2 &-d & -h \\
-c & -d & a+1+f+h & -1 \\
-f & -h & -1 & a+1+c+d
\end{array}\right] \]  corresponding to $\sigma_1$ or $\sigma_2$, respectively.  

Define 
\[S_1= \left[\begin{array}{ c c c c}
 1/2 & 1/2 & 1/2 & -1/2  \\
 1/2 & 1/2 & -1/2 & 1/2  \\
 1/2 & -1/2 & 1/2 & 1/2  \\
 -1/2 & 1/2 & 1/2 & 1/2\end{array}\right]\!, \ \  S_2= \left[\begin{array}{ c c c c}
 1/2 & 1/2 & -1/2 & 1/2  \\
 1/2 & 1/2 & 1/2 & -1/2  \\
 -1/2 & 1/2 & 1/2 & 1/2  \\
 1/2 & -1/2 & 1/2 & 1/2\end{array}\right]\!,\]
and $\mathcal{S}_i=S_i\oplus I_{n-4}$ for $i=1,2$.
Observe that each of $S_1, S_2, \mathcal{S}_1$, and $\mathcal{S}_2$ is its own inverse and transpose ($S_1$ and $S_2$ are scalar multiples of Hadamard matrices). Suppose $\sigma_1$ maps $G_1[\{v_1,v_2,v_3,v_4\}]$ to $G_2[\{v_1,v_2,v_3,v_4\}]$ and 
consider \[\mathcal{S}_1 \DL(G_1) \mathcal{S}_1 = \left[ \begin{array}{c  c}
S_1 M_1S_1 & S_1 Q \\
(S_1 Q)^T & B
\end{array}\right]\!.\]
By direct computation \cite{sageverify}, we can verify that $S_1M_1S_1=M_2^{\sigma_1}$.  Every column of $Q$ is of the form $[-p, -p, -q, -q]^T$, and $S_1[-p, -p, -q, -q]^T=[-p, -p, -q, -q]^T$. Therefore $S_1Q=Q$. 
We have shown that $\mathcal{S}_1 \DL(G_1) \mathcal{S}_1= \DL(G_2)$ and thus the graphs are cospectral. The proof that similarity by $\mathcal{S}_2$ transforms $\DL(G_1)$ to $\DL(G_2)$ when $\sigma_2$ is the isomorphism is analogous.  
\end{proof}

Using a set of co-transmission twins to construct $\DL$-cospectral graphs (as in Corollary \ref{cor:cospecAddEdge}) is a special case of Theorem \ref{thm:Cospectral Cousins}: A straightforward computation shows that  set $\{\{v_1,v_2\},\{v_3,v_4\}\}$ of co-transmission twins is a set of cousins. The twins relationships require that the subgraph $G[\{v_1,v_2,v_3,v_4\}]$ is four isolated vertices or the complete bipartite graph with parts $\{v_1,v_2\}$ and $\{v_3,v_4\}$, which implies the subgraph of $(G+v_1v_2)[\{v_1,v_2,v_3,v_4\}]$ is isomorphic to  $(G+v_3v_4)[\{v_1,v_2,v_3,v_4\}]$. 
 
Our next main result uses a set of cousins $C=\{\{v_1, v_2\}, \{v_3, v_4\}\}$ but toggles edges $v_1v_3$ and $v_2v_4$ rather than $v_1v_2$ and $v_3v_4$.   We need a preliminary lemma.

\begin{lem}\label{lem:preserveDistancePlus} 
Let $G$ be a graph with a set of cousins $C=\{\{v_1, v_2\}, \{v_3, v_4\}\}$ such that $v_1v_3, v_2v_4 \not \in E(G)$,  for every  $x \in N(v_1) \cap N(v_2)$ there exists $y\in N(v_3) \cap N(v_4)$ such that $xy \in E(G)$, and for every  $y \in N(v_3) \cap N(v_4)$ there exists $x\in N(v_1) \cap N(v_2)$ such that $xy \in E(G)$. 
Then $d_{G+v_1v_3}(u, v)=d_{G}(u, v)=d_{G+v_2v_4}(u, v)$ for all $u \in U(C)$ and $v \in V(G)$. 
\end{lem}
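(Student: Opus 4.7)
The plan is to adapt the proof of Lemma \ref{lem:preserveDistance}, using the bridging hypothesis in place of the direct shortcut there. Since $G$ is a spanning subgraph of $G+v_1v_3$, the inequality $d_{G+v_1v_3}(u,v)\le d_G(u,v)$ is automatic, so it suffices to prove the reverse. Take a shortest $u$-$v$ path $\PP$ in $G+v_1v_3$. If $\PP$ avoids the edge $v_1v_3$, then $\PP$ is a path in $G$ and we are done; so assume $\PP$ uses $v_1v_3$, and without loss of generality $v_1$ precedes $v_3$ when traversing $\PP$ from $u$ (otherwise reverse $\PP$). Let $w$ be the predecessor of $v_1$ in $\PP$, which exists because $u\in U(C)$ and so $u\ne v_1$. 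Since paths have distinct vertices and $v_3$ is the successor of $v_1$, the vertex $w$ lies in $U(C)\cup\{v_2,v_4\}$.

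If $w\in U(C)$, then $wv_1\in E(G)$ (this edge is not the added $v_1v_3$), so $d_G(w,v_1)=1$, and the cousin condition gives $d_G(w,v_2)=1$; hence $w\in N(v_1)\cap N(v_2)$. The bridging hypothesis then produces $y\in N(v_3)\cap N(v_4)$ with $wy\in E(G)$, so replacing the length-two subpath $(w,v_1,v_3)$ of $\PP$ by $(w,y,v_3)$ yields a $u$-$v$ walk in $G$ of length $|\PP|$. Since every walk contains a path of no greater length, $d_G(u,v)\le |\PP|=d_{G+v_1v_3}(u,v)$. The cases $w=v_2$ and $w=v_4$ will be ruled out by contradicting the shortestness of $\PP$: if $w=v_2$, the predecessor $u'$ of $v_2$ in $\PP$ must lie in $U(C)$ (distinctness of path vertices plus the assumption $v_2v_4\notin E(G+v_1v_3)$), so $u'v_2\in E(G)$ forces $u'v_1\in E(G)$ by the cousin condition, and $(u',v_2,v_1)$ shortcuts to $(u',v_1)$ in $\PP$; if $w=v_4$, the predecessor $u'$ of $v_4$ similarly lies in $U(C)$, $u'v_4\in E(G)$ promotes to $u'v_3\in E(G)$ by the cousin condition, and $(u',v_4,v_1,v_3)$ collapses to $(u',v_3)$. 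The equality $d_G(u,v)=d_{G+v_2v_4}(u,v)$ follows by an entirely symmetric argument, interchanging the roles of $\{v_1,v_2\}$ and $\{v_3,v_4\}$ and invoking the bridging hypothesis in the other direction.

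The main subtlety compared to Lemma \ref{lem:preserveDistance} is Case 1 above: the bridging substitution produces a walk in $G$ of the same length as $\PP$, rather than a strictly shorter path in $G+v_1v_3$, so no contradiction is available and one must explicitly invoke the walk-to-path reduction to conclude. The remaining case work is routine bookkeeping that uses the edge-restrictions $v_1v_3,v_2v_4\notin E(G)$ to exclude the undesired candidates for the predecessors $w$ and $u'$.
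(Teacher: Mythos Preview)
Your argument is correct and follows essentially the same route as the paper's proof: take a shortest $u$--$v$ path in $G+v_1v_3$ that uses the new edge, look at the predecessor $w$ of $v_1$, rule out $w\in\{v_2,v_4\}$ by shortcutting via the cousin equalities, and in the remaining case $w\in U(C)$ use the bridging hypothesis to reroute $(w,v_1,v_3)$ through some $y\in N(v_3)\cap N(v_4)$. Your write-up is in fact slightly more careful than the paper's in two places: you explicitly justify why the predecessor $u'$ of $v_2$ (resp.\ $v_4$) lies in $U(C)$ using $v_2v_4\notin E(G+v_1v_3)$, and you correctly note that the rerouted object is a priori only a \emph{walk} in $G$ of the right length (since $y$ might already occur on $\PP$), invoking walk-to-path reduction rather than claiming a path outright.
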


\begin{proof}
Let $u \in U(C)$ and $v \in V(G)$.
If a shortest path between $u$ and $v$ in $G+v_1v_3$ does not contain edge $v_1v_3$  and a shortest path between $u$ and $v$ in $G+v_2v_4$ does not contain edge $v_2v_4$,
then the distance between $u$ and $v$ is the same in $G$, $G+v_1v_3$, and $G+v_2v_4$.
We show that there is a shortest path that does not include $v_1v_3$ between $u$ and $v$ in $G+v_1v_3$; analogously, there is a shortest path in $G+v_2v_4$ that does not contain edge $v_2v_4$.

Suppose a shortest path $\mathcal{P}$ from $u$ to $v$ in $G+v_1v_3$ contains the edge $v_1v_3$; without loss of generality, $v_1$ precedes $v_3$ in the path order starting from $u$.  Since $u\in U(C)$, there is a predecessor $x$ of $v_1$ in $\mathcal{P}$, i.e. $\mathcal{P}$ contains the subpath $(x,v_1, v_3)$. We first show that  $x\in U(C)$.  Since $\mathcal{P}$ is a path, $x\ne v_1,v_3$.  Since $(x,v_1,v_3)$ is a subpath of a shortest path between $u$ and $v_3$, it cannot be replaced by a shorter subpath.  For $u'\in U(C)$, a subpath of the form 
$(u',v_2,\dots,v_1,v_3)$ can be replaced by $(u',v_1,v_3)$, and a subpath of the form 
$(u',v_4,\dots,v_1,v_3)$ can be replaced by $(u',v_3)$.  Thus $x\ne v_2,v_4$, so $x\in U(C)$.
Because $x\in U(C)$, there exists $y\in N(v_3) \cap N(v_4)$ such that $xy \in E(G)$,  the subpath $(x,v_1,v_3)$ can be replaced by $(x,y,v_3)$, creating a path of equal length that avoids edge $v_1v_3$.
\end{proof} 

\begin{thm} \label{thm:Cospectral Cousins v1v3}
Let $G$ be a graph with a set of cousins $C=\{\{v_1, v_2\}, \{v_3, v_4\}\}$ satisfying the following conditions:\vspace{-5pt}
\bit
\item Vertices $v_1, v_3$ are not adjacent and $v_2, v_4$ are not adjacent.
\item The subgraph of $G+v_1v_3$ induced by $\{v_1,v_2,v_3,v_4\}$  is isomorphic to the subgraph of $G+v_2v_4$ induced by $\{v_1,v_2,v_3,v_4\}$  via the permutation $\sigma_1=(14)(23)$.
\item For every  $x \in N(v_1) \cap N(v_2)$ there exists $y\in N(v_3) \cap N(v_4)$ such that $xy \in E(G)$, and for every  $y \in N(v_3) \cap N(v_4)$ there exists $x\in N(v_1) \cap N(v_2)$ such that $xy \in E(G)$.
\eit
If $G_1=G+v_1v_3$ and $G_2=G+v_2v_4$  are not isomorphic, then they are  $\DL$-cospectral.
\end{thm}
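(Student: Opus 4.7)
The plan is to emulate the proof of Theorem \ref{thm:Cospectral Cousins}, using Lemma \ref{lem:preserveDistancePlus} in place of Lemma \ref{lem:preserveDistance}. By Lemma \ref{lem:preserveDistancePlus}, the matrices $\DL(G_1)$ and $\DL(G_2)$ agree outside the $4\times 4$ principal submatrix indexed by $\{v_1,v_2,v_3,v_4\}$, so I can partition
\[\DL(G_1) = \begin{bmatrix} M_1 & Q \\ Q^T & B \end{bmatrix}, \quad \DL(G_2) = \begin{bmatrix} M_2 & Q \\ Q^T & B \end{bmatrix},\]
with common blocks $Q$ and $B$. The cousins equalities $d(u, v_1) = d(u, v_2)$ and $d(u, v_3) = d(u, v_4)$ for $u \in U(C)$ force each column of $Q$ to have the form $[-p, -p, -q, -q]^T$, and the cousins sum condition equalizes the four row sums of $Q$. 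Since every row sum of a distance Laplacian is zero, $M_1$ and $M_2$ share a common constant row sum, say $a$.

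Next, I would parametrize $M_1$ using $a$ and the pairwise distances $d_{G_1}(v_i, v_j)$, noting $d_{G_1}(v_1, v_3) = 1$ from the new edge, and $d_{G_1}(v_1, v_2), d_{G_1}(v_3, v_4) \in \{1, 2\}$ by the cousins proposition and bullet 2 (which forces $v_1v_2 \in E(G) \iff v_3v_4 \in E(G)$). I would similarly parametrize $M_2$, using $d_{G_2}(v_2, v_4) = 1$. The crucial step is to show that $d_{G_2}(v_i, v_j) = d_{G_1}(\sigma_1(v_i), \sigma_1(v_j))$ for $i, j \in \{1, 2, 3, 4\}$, so $M_2$ is obtained from $M_1$ via the permutation $\sigma_1=(14)(23)$. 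The induced subgraph isomorphism handles edges within $\{v_1,\dots,v_4\}$, Lemma \ref{lem:preserveDistancePlus} handles path segments through $U(C)$, and bullet 3 is used to convert any path in $G_1$ that traverses the added edge $v_1v_3$ into an equal-length path in $G_2$ via either the added edge $v_2v_4$ or a bridge edge $xy$ between $N(v_1)\cap N(v_2)$ and $N(v_3)\cap N(v_4)$.

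Finally, I would apply the similarity $\mathcal{S}_1 = S_1 \oplus I_{n-4}$ with $S_1$ the Hadamard-like matrix from Theorem \ref{thm:Cospectral Cousins}. A direct computation confirms $S_1[-p, -p, -q, -q]^T = [-p, -p, -q, -q]^T$, giving $S_1 Q = Q$. A second direct computation, parallel to the verification in Theorem \ref{thm:Cospectral Cousins}, confirms $S_1 M_1 S_1 = M_2$ when the entries of $M_2$ are expressed via the $\sigma_1$-transformed distances. Therefore $\mathcal{S}_1 \DL(G_1) \mathcal{S}_1 = \DL(G_2)$, and since $\mathcal{S}_1$ is orthogonal, $G_1$ and $G_2$ are $\DL$-cospectral.

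The main obstacle is establishing the distance correspondence $d_{G_2}(v_i, v_j) = d_{G_1}(\sigma_1(v_i), \sigma_1(v_j))$. The induced-subgraph isomorphism controls only edges within $\{v_1,\dots,v_4\}$, whereas the entries of $M_1$ and $M_2$ encode distances realized by paths through the whole graph. The path-swap argument, combining bullet 3 with the distance-preservation to $U(C)$ from Lemma \ref{lem:preserveDistancePlus}, is the technical heart of the proof, and mirrors the role that the edge-replacement argument plays in Theorem \ref{thm:Cospectral Cousins}.
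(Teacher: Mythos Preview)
Your proposal is correct and takes essentially the same approach as the paper, whose proof is extremely terse: it simply invokes Lemma~\ref{lem:preserveDistancePlus} in place of Lemma~\ref{lem:preserveDistance}, asserts that the $4\times 4$ principal submatrices have structure analogous to $M_1$ and $M_2^{\sigma_1}$, and says the remainder follows the same method as Theorem~\ref{thm:Cospectral Cousins}. You have in fact been more careful than the paper in flagging that the induced-subgraph isomorphism only controls edges among $\{v_1,\dots,v_4\}$ while $M_1,M_2$ record full-graph distances, and your sketch for bridging this gap (using the cousin equalities for paths through $U(C)$ and bullet~3 for paths using the added edge) is the right idea.
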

\begin{proof}
By Lemma \ref{lem:preserveDistancePlus},  the entries of $\DL(G_1)$ are equal to $\DL(G_2)$ everywhere except the $4 \times 4$ principal submatrix corresponding to $v_1,v_2,v_3,v_4$. The $4 \times 4$ principal submatrices have  structure analogous to $M_1$ and to $M_2$ with $\sigma_1$, and the remainder of the proof follows the same method. 
 \end{proof}

The reason we assume that the isomorphism of $G_1[\{v_1,v_2,v_3,v_4\}]$ and $G_2[\{v_1,v_2,v_3,v_4\}]$ is  $\sigma_1=(14)(23)$ is because isomorphic graphs are not cospectral by definition. Applying Lemma \ref{lem:isomorphism} with $v_2=u_3$ and $v_3=u_2$ produces $\sigma_1=(v_1 \ v_4)(v_2 \ v_3)$ and $\sigma_2=(v_1 \ v_2)(v_3 \ v_4)$.  Suppose $G_1[\{v_1,v_2,v_3,v_4\}]$  is isomorphic to $G_2[\{v_1,v_2,v_3,v_4\}]$ via $\sigma_2$.  Let $P_{\sigma_2}$ denote the $4\x4$ permutation matrix representing $\sigma_2=(12)(34)$ and $P_2=P_{\sigma_2}\oplus I_{n-4}$.  With the partition of $\DL(G_1)$ as in \eqref{eq:mtx-part-3.9}, $P_{\sigma_2}Q=Q$.  Thus $P_2\DL(G_1)P_2=\DL(G_2)$ and $G_1$ is isomorphic to $G_2$.

We now discuss an example of Theorem \ref{thm:Cospectral Cousins v1v3} on a family of cospectral bipartite graphs. This family contains the only bipartite pair of cospectral graphs on eight vertices.

\begin{defn}
For $k \in \mathbb{Z}_+$,  $B_k$ is the  graph defined as follows (and shown in Figure \ref{fig:bipartiteFamilyGen} not including the dashed edges):  Five vertices are denoted by $v_0, v_1, v_2, v_3, $ and $v_4$, and the set of edges of the induced subgraph $B_k[v_0, v_1, v_2, v_3,v_4]$ is $\{v_0v_1, v_0v_2, v_2v_3\}$.   There are $k$ twin leaf vertices adjacent to $v_0$;   the set of these vertices is denoted by $L$. There are $k+1$ twin  vertices of degree three each  adjacent   to  $v_0, v_3,$ and $v_4$;  the set of these vertices is denoted by $R$. 
\end{defn}

\begin{figure}[h!]
\begin{center}
\includegraphics[scale=0.4]{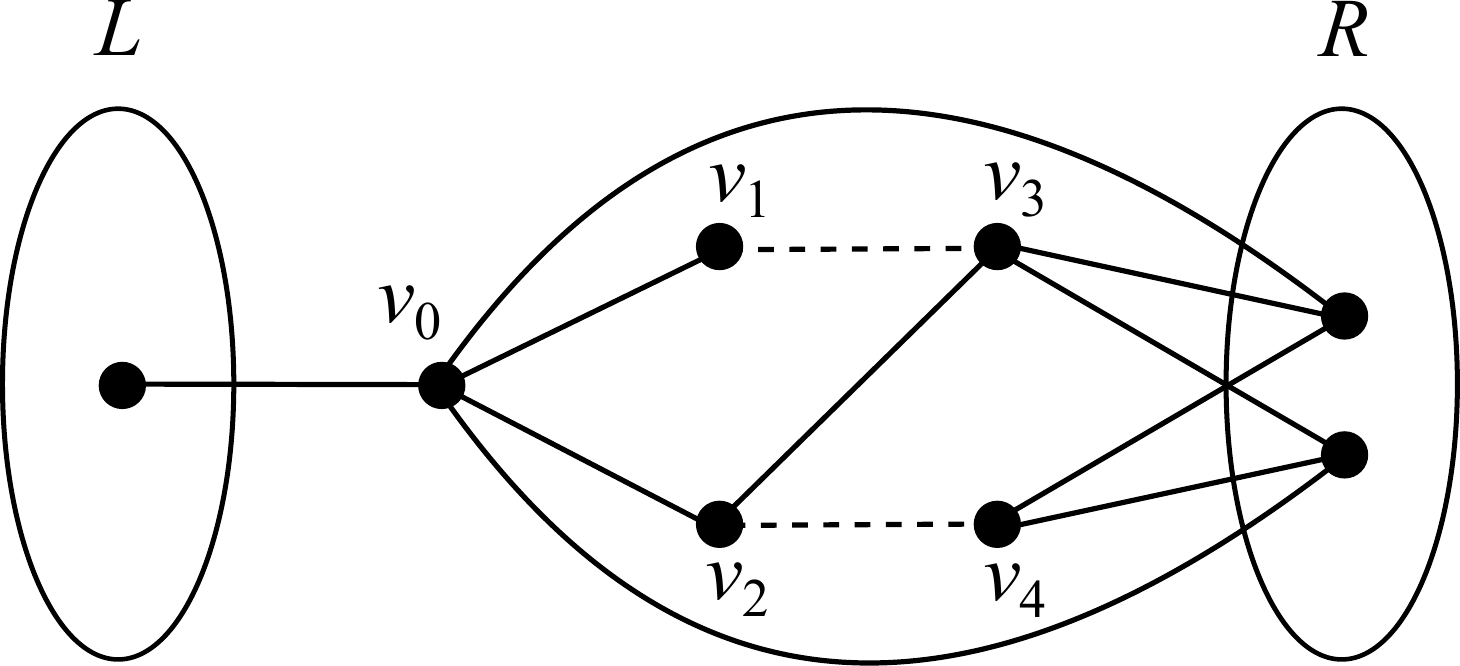}
\caption{A bipartite family $B_k$ where  the dashed edges are toggled to create $\DL$-cospectral graphs. The set $L$ contains $k$ twin vertices and the set $R$ contains $(k+1)$ twin vertices.}
\label{fig:bipartiteFamilyGen}
\end{center}
\end{figure}

\begin{thm}\label{thm:bipart} The graphs $B_k+v_1v_3$ and $B_k+v_2v_4$ are $\DL$-cospectral, and both are bipartite.
\end{thm}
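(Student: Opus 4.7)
The plan is to invoke Theorem~\ref{thm:Cospectral Cousins v1v3} with the set $C=\{\{v_1,v_2\},\{v_3,v_4\}\}$ in $G=B_k$, and then separately verify bipartiteness. The first step is to read off the neighborhoods from the definition of $B_k$: $N(v_1)=\{v_0\}$, $N(v_2)=\{v_0,v_3\}$, $N(v_3)=\{v_2\}\cup R$, $N(v_4)=R$, $N(v_0)=\{v_1,v_2\}\cup L\cup R$, each $\ell\in L$ has $N(\ell)=\{v_0\}$, and each $r\in R$ has $N(r)=\{v_0,v_3,v_4\}$. So $U(C)=\{v_0\}\cup L\cup R$.

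Next I would verify that $C$ is a set of cousins. A direct case analysis shows that for every $u\in U(C)$, $d(u,v_1)=d(u,v_2)$ (any $u,v_i$-path must pass through $v_0$, so the distances are equal) and $d(u,v_3)=d(u,v_4)$ (either routed through a vertex of $R$ or, in the case $u=v_0$, through $v_2$ and a vertex of $R$, respectively, giving distance $2$ both times). For the summation condition, one computes
\[
\sum_{u\in U(C)}d(u,v_1)=\underbrace{1}_{u=v_0}+\underbrace{2k}_{u\in L}+\underbrace{2(k+1)}_{u\in R}=4k+3,
\]
and
\[
\sum_{u\in U(C)}d(u,v_3)=\underbrace{2}_{u=v_0}+\underbrace{3k}_{u\in L}+\underbrace{k+1}_{u\in R}=4k+3,
\]
so $C$ is a set of cousins.

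Now I would check the three hypotheses of Theorem~\ref{thm:Cospectral Cousins v1v3}. Clearly $v_1v_3,v_2v_4\notin E(B_k)$. The induced subgraph $(B_k+v_1v_3)[\{v_1,v_2,v_3,v_4\}]$ has edges $\{v_1v_3,v_2v_3\}$ (a path $v_1\text{--}v_3\text{--}v_2$ with $v_4$ isolated), while $(B_k+v_2v_4)[\{v_1,v_2,v_3,v_4\}]$ has edges $\{v_2v_3,v_2v_4\}$ (a path $v_4\text{--}v_2\text{--}v_3$ with $v_1$ isolated); the permutation $\sigma_1=(14)(23)$ maps the first edge set onto the second, providing the required isomorphism. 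Finally, $N(v_1)\cap N(v_2)=\{v_0\}$ and $N(v_3)\cap N(v_4)=R$, and since $v_0r\in E(B_k)$ for every $r\in R$, the adjacency condition holds in both directions.

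To conclude via Theorem~\ref{thm:Cospectral Cousins v1v3}, I would show $B_k+v_1v_3$ and $B_k+v_2v_4$ are non-isomorphic by comparing the number of degree-one vertices: in $B_k+v_1v_3$ the only leaves are the $k$ elements of $L$, while in $B_k+v_2v_4$ the leaves are $L\cup\{v_1\}$, giving $k+1$ leaves. The theorem then yields $\DL$-cospectrality. For bipartiteness, I would exhibit the bipartition $\{v_0,v_3,v_4\}\mid\{v_1,v_2\}\cup L\cup R$ of $B_k$ and note that both $v_1v_3$ and $v_2v_4$ join vertices in opposite parts, so bipartiteness is preserved. The only potential obstacle is the careful distance bookkeeping in the cousins condition, but this is routine given the explicit description of $B_k$.
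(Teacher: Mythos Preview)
Your proof is correct and follows essentially the same route as the paper: verify that $C=\{\{v_1,v_2\},\{v_3,v_4\}\}$ is a set of cousins by direct distance computation (yielding the same sum $4k+3$), check the three hypotheses of Theorem~\ref{thm:Cospectral Cousins v1v3}, and exhibit the bipartition $\{v_0,v_3,v_4\}\mid \{v_1,v_2\}\cup L\cup R$. One small slip in your justification: it is not true that every $u,v_2$-path must pass through $v_0$ (for $r\in R$ the path $r\text{--}v_3\text{--}v_2$ avoids $v_0$), though your stated distances are nevertheless correct; and your explicit non-isomorphism check via leaf counts is a detail the paper's own proof omits.
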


\begin{proof}
By construction,  $B_k$ is a bipartite graph parts $\{v_0, v_3, v_4\}$ and $V(B_k)\setminus \{v_0, v_3, v_4\}$. The edges $v_1v_3$ or $v_2v_4$ each have endpoints in different parts, so adding these edges keeps the graph bipartite.

We show that $B_k$ satisfies Theorem \ref{thm:Cospectral Cousins v1v3}. First we show that $\{\{v_1, v_2\}, \{v_3, v_4\}\}$ are a set of cousins. 
By inspection, we see that for all $u \in L$, $d(u, v_1)=2=d(u, v_2)$ and $d(u, v_3)=3=d(u, v_4)$. For all $u \in R$,  $d(u, v_1)=2=d(u, v_2)$ and $d(u, v_3)=1=d(u, v_4)$. Additionally $d(v_0, v_1)=1=d(v_0, v_2)$ and $d(v_0, v_3)=2=d(v_0, v_4)$. 
Finally,
\[\sum_{u \in U(C)} d(u, v_1) = \sum_{u \in L}d(u, v_1) + d(v_0, v_1)+\sum_{u \in R} d(u, v_1)=2k+1+2(k+1) =4k+3.\]
\[\sum_{u \in U(C)} d(u, v_3)= \sum_{u \in L}d(u, v_3) + d(v_0, v_3)+\sum_{u \in R} d(u, v_3)=3k+2+(k+1)=4k+3.\]
Therefore,  $C=\{\{v_1, v_2\}, \{v_3, v_4\}\}$ is a set of cousins. 
In $B_k$, we have that $v_1$ is not adjacent to $v_3$ and $v_2$ is not adjacent to $v_4$. The subgraph of $B_k+v_1v_3$ induced by $\{v_1, v_2, v_3, v_4\}$ is isomorphic to the subgraph of $B_k+v_2v_4$ induced by $\{v_1, v_2, v_3, v_4\}$. Additionally, the only vertex in $N(v_1) \cap N(v_2)=\{v_0\}$  is adjacent to every vertex in $R=N(v_3) \cap N(v_4)$.  Thus,  $B_k +v_1v_3$ and $B_k+v_2v_4$ are $\DL$-cospectral by Theorem \ref{thm:Cospectral Cousins v1v3}.  
\end{proof} 

The family $B_k$ grows by increasing $k$, or twining two vertices (one in $L$ and one in $R$) at the same time to preserve the cousin property. This growth is not unique to this cospectral family: We can change the set of edges between the vertices $\{v_1,v_2,v_3,v_4\}$ provided   the cousin property and the induced graph isomorphism are preserved.  
Furthermore, $L$ and $R$ can have arbitrary structure since it does not affect the cousin property nor the induced graph isomorphism; however adding edges within $L$ or $R$ will result in the graphs that are not bipartite. 

\section{Cospectral 
transmission regular graphs}\label{s:TR}

If two graphs are transmission regular and $\mathcal{D}$-cospec-tral, then they are $\mathcal{D}^L$-cospectral (see Remark \ref{rem:D-DL} for more detail). Thus, results for $\D$-cospectrality can be combined with characterizations of transmission regular graphs in order to obtain results about $\DL$-cospectrality.
In this section we  discuss transmission regular $\DL$-cospectral graphs, including strongly regular graphs, distance-regular graphs, and circulant graphs. 
We begin our discussion of transmission regular graphs by generalizing a well-known example of a transmission regular graph that is not  regular
(the graph  in Figure \ref{fig_H_3}) to an  infinite family of graphs with this property.

\begin{figure}[h]
\begin{center}
\includegraphics[scale=0.3]{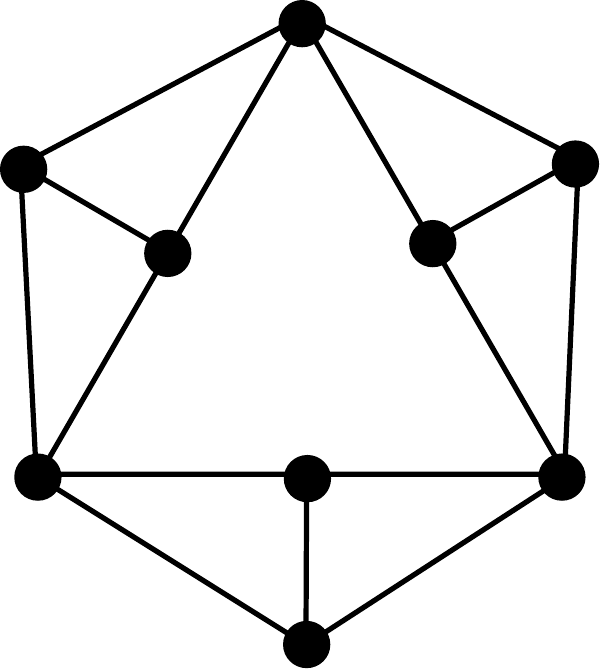} 
\end{center}
\caption{The graph $H_3$, which is transmission regular but not regular.}
\label{fig_H_3}
\end{figure}

\begin{defn}{\rm 
Let $H_n$ be the graph with vertex set $V=\{v_1,\ldots,v_{2n},u_1,\ldots,u_n\}$ and edge set $E=\{v_iv_{i+1}:1\leq i\leq 2n-1\}\cup\{v_{2n}v_1\}\cup \{u_iv_{2i-1},u_iv_{2i},u_iv_{2i+1}:1\leq i\leq n-1\}\cup \{u_nv_{2n-1},u_nv_{2n},u_nv_1\}$.
}\end{defn}

\begin{prop} For each  $n\geq 2$, $H_n$ is not regular.
For each odd $n\geq 3$, $H_n$ is   transmission regular and $t(H_n)=\frac{3n^2+1}{2}$.
\end{prop}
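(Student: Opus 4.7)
My plan is to handle the two assertions separately. For non-regularity ($n \ge 2$), I would read off the degrees from the definition: each $u_i$ has degree $3$, each $v_{2i}$ has degree $3$ with neighbors $v_{2i-1}, v_{2i+1}, u_i$, while each $v_{2i-1}$ has degree $4$ with neighbors $v_{2i-2}, v_{2i}, u_{i-1}, u_i$ (indices taken cyclically). Since degrees $3$ and $4$ both occur, $H_n$ is not regular.

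For transmission regularity when $n \ge 3$ is odd, I would first invoke the rotational automorphism $\phi$ of $H_n$ defined by $v_j \mapsto v_{j+2}$ (mod $2n$) and $u_i \mapsto u_{i+1}$ (mod $n$); its three orbits $\{v_{2i-1}\}$, $\{v_{2i}\}$, $\{u_i\}$ reduce the task to showing $t(v_1) = t(v_2) = t(u_1) = (3n^2+1)/2$. The key distance lemma I would prove is that $d_{H_n}(v_i, v_j)$ equals the cycle distance $c(i,j) := \min(|i-j|,\, 2n-|i-j|)$ in $C_{2n}$: any detour through a $u_k$ links two vertices of $\{v_{2k-1}, v_{2k}, v_{2k+1}\}$ by a length-$2$ subpath, which is never shorter than their cycle distance. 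Consequently, $d(v_a, u_i) = 1 + \min_{b\in\{2i-1,2i,2i+1\}} c(a,b)$ and $d(u_i, u_j) = 2\min(|i-j|,\, n-|i-j|)$, the latter because shortest $u$-to-$u$ paths hop through pairs $u_k\, v_{2k+1}\, u_{k+1}$.

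The remainder is direct computation. Summing cycle distances in $C_{2n}$ from a fixed vertex gives $\sum_{k\ne 1} d(v_1, v_k) = n^2$. Writing $n = 2m+1$, I would evaluate $\sum_i d(v_1, u_i) = n + 2\sum_i \min(i-1,\, n-i) = n + 2m^2$, then carry out the parallel computations for $v_2$ and $u_1$; all three transmissions should come out to $n^2 + n + 2m^2 = 6m^2 + 6m + 2 = (3n^2+1)/2$ (with the $u$-to-$u$ sum $\sum_{j\ne 1} d(u_1, u_j) = 2m(m+1)$ completing $t(u_1)$).

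The main obstacle is the case split inside the minima. Oddness of $n$ forces the crossover points (for instance, the equation $2i - 3 = 2n - 2i + 1$ governing $d(v_2, u_i)$ yields $i = (n+2)/2$) to be non-integer, so each min-sum decomposes cleanly into two matching arithmetic progressions; for even $n$ these crossovers fall on integers and the corresponding sums for $v_1$ and $v_2$ differ by $1$, confirming that the oddness hypothesis is genuinely needed.
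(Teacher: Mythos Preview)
Your proposal is correct but takes a genuinely different route from the paper. The paper observes an additional symmetry you missed: since $N(u_i)\setminus\{v_{2i}\}=\{v_{2i-1},v_{2i+1}\}=N(v_{2i})\setminus\{u_i\}$, swapping $u_i\leftrightarrow v_{2i}$ is an automorphism of $H_n$, so $t(u_i)=t(v_{2i})$ automatically. This reduces the problem to showing only $t(u_1)=t(v_1)$, and the paper does this by directly tallying how many vertices lie at each distance from $u_1$ and from $v_1$ (three at distance~$1$, four at distance~$2$, two at distance~$3$, \ldots), then summing the resulting arithmetic series. Your approach, by contrast, invests in a structural distance lemma ($d_{H_n}(v_i,v_j)$ equals the $C_{2n}$ cycle distance, with derived formulas for $d(v_a,u_i)$ and $d(u_i,u_j)$) and then computes all three of $t(v_1),t(v_2),t(u_1)$ explicitly. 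The paper's version is shorter and exploits symmetry more fully; yours is more explicit about why the distance counts come out as they do, and your crossover analysis gives a cleaner explanation of why the odd-$n$ hypothesis is needed.
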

\bpf
Since $H_n$ has vertices of degrees three and four,  $H_n$ is not regular.  
By the symmetry of $H_n$, all vertices  $u_i$ and $v_{2i}$ in the definition of $H_n$ have the same transmission for $1\leq i\leq n$; likewise, all vertices $v_{2i-1}$ have the same transmission as $v_1$. Thus, to show that $H_n$ is transmission regular, it suffices to show that $t(u_1)=t(v_1)$. Using the identities $1+3+\dots+k=\left(\frac{k+1}{2}\right)^2$ and $2+4+\dots+k=\frac{k}{2}\left(\frac{k}{2}+1\right)$, we have
\begin{eqnarray*}
t(u_1)&=&3\, (1)  +  4\,(2)+ 2\,(3)+ 4\,(4)+ 2\,(5)+\dots+2\,(n-2)+4\,(n-1)+1\,(n)\\
&=&4\left(\frac{n-1}{2}\left(\frac{n-1}{2}+1\right)\right)+2\left(\left(\frac{n-2+1}{2}\right)^2\right)+1+n\\
&=&\frac{3n^2+1}{2}\\
&=&4\left(\frac{n-2+1}{2} \right)^2+2\left(\frac{n-1}{2}\left(\frac{n-1}{2}+1\right)\right)+2n\\
&=&4\,(1) + 2\,(2)+ 4\,(3)+2\,(4)+4\,(5)+\dots+4\,(n-2)+2\,(n-1)+2\,(n)\\
&=&t(v_1),
\end{eqnarray*}
where the first and last equalities can be easily verified by inspection by counting the distances from $u_1$ and $v_1$ to all other vertices. 
\epf

\begin{rem}\label{rem:D-DL}{\rm  Suppose $G$ is a transmission regular graph of order $n$.  The spectral radius of the distance matrix  $\D(G)$ is the transmission of $G$ and the trace of the distance Laplacian is $\tr(\DL(G))=nt(G)$.   Observe that  $\dLspec(G)=\{\dlev_1=0,\dlev_2,\dots,\dlev_n\}$ where $\dlev_k=t(G)- \dev_k$ for $k=1,\dots,n$  (recall that distance eigenvalues are ordered largest to smallest, whereas distance Laplacian eigenvalues are ordered smallest to largest).  
Thus, two transmission regular graphs  are $\D$-cospectral if and only if they are $\DL$-cospectral.  
}\end{rem}

\subsection{Strongly regular and distance-regular graphs}\label{ssSRG}

Strongly regular, and more generally distance-regular graphs, provide important examples of transmission regular $\DL$-cospectral graphs.  A $k$-regular graph $G$ of order $n$ is {\em strongly regular} with parameters
$(n, k, \LL,\mu)$ if every pair of adjacent vertices has $\LL$ common neighbors
and every pair of nonadjacent distinct vertices has $\mu$ common neighbors.   A connected strongly regular graph $G$ has 
$\diam(G)\le  2$.
There are well known formulas for computing the adjacency eigenvalues of $G$ and their multiplicities from the strongly regular graph parameters  (see \cite[Section 10.2]{GR01}).  Formulas for the distance eigenvalues and their multiplicities have also been determined \cite[p. 262]{AP15} and \cite{GRWC15}.  Thus the distance Laplacian eigenvalues can be readily determined, as done in the next remark. 

\begin{rem}\label{rem:DL-SRG} Let $G$ be a strongly regular graph with parameters $(n,k,\LL,\mu)$.  Then any vertex $v$ has $k$ neighbors and  $n-k-1$ vertices at distance two,  so $G$ is $(2n-k-2)$-transmission regular.    
Thus the distance Laplacian eigenvalues of $G$ are 
    \bea0 && \mbox{ of multiplicity } 1\\
2n-k+  \frac{1}{2}\left(\lambda-\mu+\sqrt{(\lambda-\mu)^2+4(k-\mu)}\right)&& \mbox{ of multiplicity }\frac{1}{2}
\left(n-1-\frac{2k+(n-1)(\lambda-\mu)}{\sqrt{(\lambda-\mu)^2+4(k-\mu)}}\right)\\
2n-k+\frac{1}{2}\left(\lambda-\mu-\sqrt{(\lambda-\mu)^2+4(k-\mu)}\right)&& \mbox{ of multiplicity }\frac{1}{2}
\left(n-1+\frac{2k+(n-1)(\lambda-\mu)}{\sqrt{(\lambda-\mu)^2+4(k-\mu)}}\right).
\eea
Furthermore, two strongly regular graphs with the same set of parameters are $\DL$-cospectral. 
\end{rem}

There are examples, such as those presented next,  of non-isomorphic strongly regular graphs with the same parameters $(n,k,\LL,\mu)$; such graphs are therefore  $\DL$-cospectral and transmission regular (so neither strong regularity nor transmission regularity implies a graph is spectrally determined).  The notation $\lambda^{(m)}$ means that the multiplicity of eigenvalue $\lambda$ is $m$.

\begin{ex}\label{ex:S-H24}{\rm  The Shrikhande graph, shown in Figure \ref{fig:Shrik}, is the graph $S=(V,E)$ where \\$V=\{0, 1, 2, 3\} \x\{0, 1, 2, 3\}$ and $E=\{\{(a, b),(c, d)\}:  (a, b) -(c, d) \in\{\pm(0, 1), \pm(1, 0), \pm(1, 1)\}\}$. Both $S$ and the 2,4-Hamming graph $H(2,4):=K_4\cp K_4$ are strongly regular with parameters $(16,6,2,2)$.  They are 24-transmission regular with  distance spectrum  ${\dspec(S)}=\left\{24, 0^{(9)}, (-4)^{(6)}\right\}$ and distance Laplacian spectrum ${\dLspec(S)}=\left\{0, 24^{(9)}, 28^{(6)}\right\}$.  Since $S$ and $H(2,4)$ are cospectral and strongly regular, so are their complements $\OL S$ and $\OL{H(2,4)}$.  Observe that $\omega(S)=3=\alpha(\OL{S})$ and $\omega(H(2,4))=4=\alpha(\OL{H(2,4)})$.}
\begin{figure}[h!]
\begin{center}
\scalebox{.25}{\includegraphics{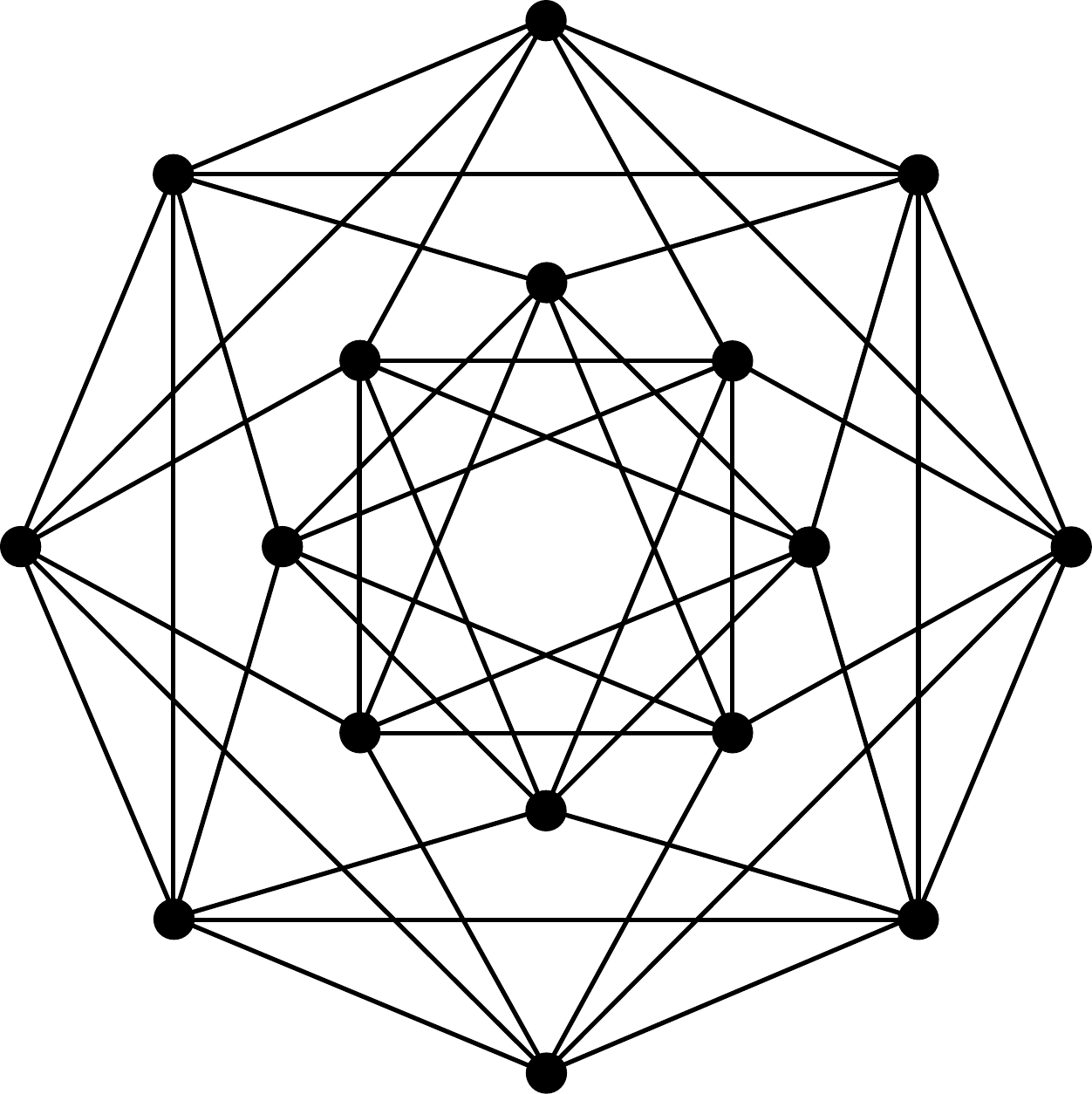}}
\caption{The Shrikhande graph $S$\label{fig:Shrik}}\vspace{-10pt}
\end{center}
\end{figure}
\end{ex}

\begin{ex}\label{ex:DH}{\rm   Let  $i, j, k\ge 0$ be  integers. The graph $G =(V, E)$ is  {\em distance-regular} if for any choice of $u, v\in V$ with $d(u, v) =k$, the number of vertices $w\in V$ such that $d(u, w) =i$ and $ d(v, w) =j$ is independent of the choice of $u$ and $v$. Distance spectra of several families of distance-regular graphs were determined in \cite{AP15} and the distance spectra of  all distance-regular graphs having exactly one positive distance eigenvalue are listed in \cite{GRWC15}.  A {\em Doob graph $D(m,n)$} of diameter $2m+n$ is the Cartesian product of $m$ copies of the Shrikhande graph with one copy of the Hamming graph $H(n,4)$.  It is known \cite{GRWC15} that  the Doob graph $D(m,n)$ and the Cartesian product of $m$ copies of $H(2,4)$ with $H(n,4)$ are $\D$-cospectral and thus $\DL$-cospectral. 
}\end{ex}

\begin{ex}{\rm  The triangular graph $T_8$ (which is the  line graph of $K_8$) and the three Chang graphs are all strongly regular with the parameters $(28, 12, 6, 4)$  (see \cite{KS1994} for definitions of the Chang graphs).  Thus  all four have the same distance spectrum  $\{42, 0^{(20)}, (-6)^{(7)}\}$ and the same $\DL$-spectrum $\{0, 42^{(20)}, 48^{(7)}\}$.}
\end{ex}


\subsection{Cospectral circulant graphs}\label{sscirc}

 The ring of integers mod $n$ is denoted by $\Z_n$, with elements $0,\dots,n-1$, and we view an element of $\Z_n$ as also having a meaningful interpretation as an integer, where we can use the standard ordering of elements, although arithmetic is performed modulo $n$.  For an integer $n\ge 3$ and a set $S\subseteq \Z_n$ that does not contain zero, the {\em circulant graph} defined by $n$ and $S$  is the graph  with vertex set $\Z_n$ and edges of the form $ij$ where $i-j$ or $j-i$ is in $S$; this graph is denoted by $\Circ n S$. The set $S$ is called the {\em connection set} of $\Circ n S$. By convention we assume that   $1\le x\le \lf\frac n 2\rf$ for every $x\in S$.  If $q$ is relatively prime to $n$ then $\Circ n S$ is isomorphic to $\Circ n {qS}$ where $qS=\{qx:x\in S\}$.  Thus it is common to require that $1\in S$ if $S$ contains any element relatively prime to $n$, and we make this assumption also.  Note that $\Circ n S$ is connected if and only if the greatest common divisor of the elements of $S$ together with $n$ is one.  A circulant graph is both regular and transmission regular. 
Circulants are an interesting family of graphs that play an important role in some applications of graph theory such as computing and quantum spin networks \cite{BPS, Ilic}. They also provide examples of distance Laplacian (and distance) cospectral mates.
 
   Table \ref{tab:circ} lists the orders and connection sets for all sets of $\DL$-cospectral circulant graphs  of order at most 20; the data in the table were produced by computations in {\em Sage} \cite{sagecode}. The smallest order is 16, and  the $\DL$-cospectral  graphs $\circs {16} {1,2,8}$ and $\circs {16} {1,6,8}$ are shown in Figure \ref{fig:circ}.

\begin{table}[h!]
\begin{center}
 \begin{tabular}{|c|c|}\hline
$n$  & $S$ \\ 
\hline
16 & \{1, 2, 8\}, \{1, 6, 8\}\\ \hline
18 & \{1, 2, 9\}, \{1, 4, 9\}\\ \hline
18 & \{1, 6, 8\}, \{2, 3, 6\}\\ \hline
20 & \{1, 2, 3, 8\}, \{1, 2, 7, 8\} \\ \hline
20 & \{1, 2, 4, 9\}, \{1, 6, 8, 9\}\\ \hline
20 & \{1, 2, 4, 10\}, \{1, 6, 8, 10\}\\ \hline
20 & \{1, 2, 4, 9, 10\}, \{1, 6, 8, 9, 10\}\\ \hline
20 & \{1, 2, 3, 8, 10\}, \{1, 2, 7, 8, 10\} \\ \hline
20 & \{1, 2, 4, 5, 9\}, \{1, 5, 6, 8, 9\}\\ \hline
20 & \{1, 2, 3, 5, 8\}, \{1, 2, 5, 7, 8\} \\ \hline
20 & \{1, 2, 4, 5, 9, 10\}, \{1, 5, 6, 8, 9, 10\}\\ \hline
20 & \{1, 2, 3, 5, 8, 10\}, \{1, 2, 5, 7, 8, 10\}\\ \hline
\end{tabular}
 \end{center} 
\caption{Order $n$ and connection sets $S$ for small circulant $\DL$-cospectral pairs
}\label{tab:circ}\vspace{-10pt}
 \end{table}

\begin{figure}[!h]\begin{center}
\scalebox{.25}{\includegraphics{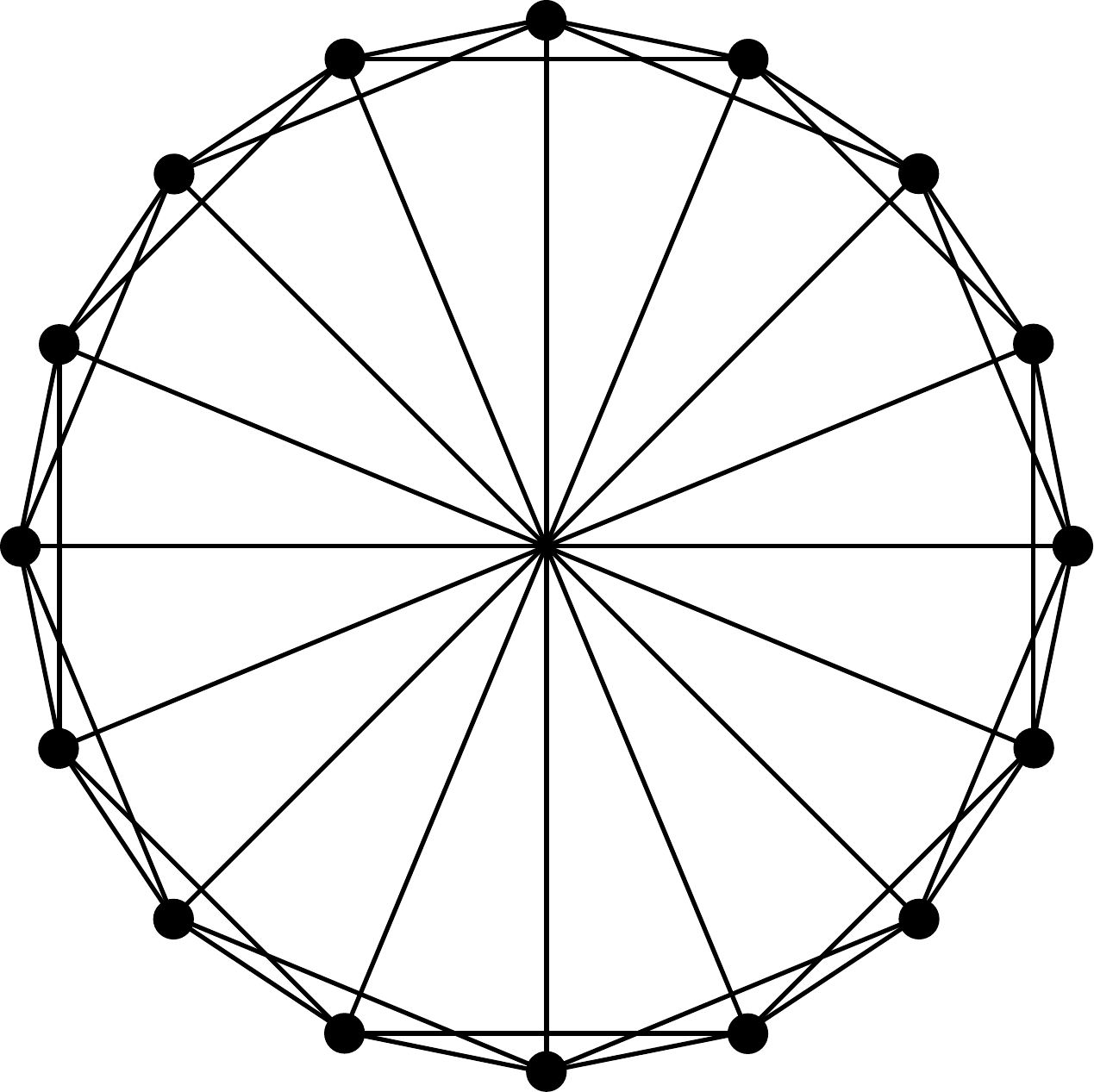}\qquad\includegraphics{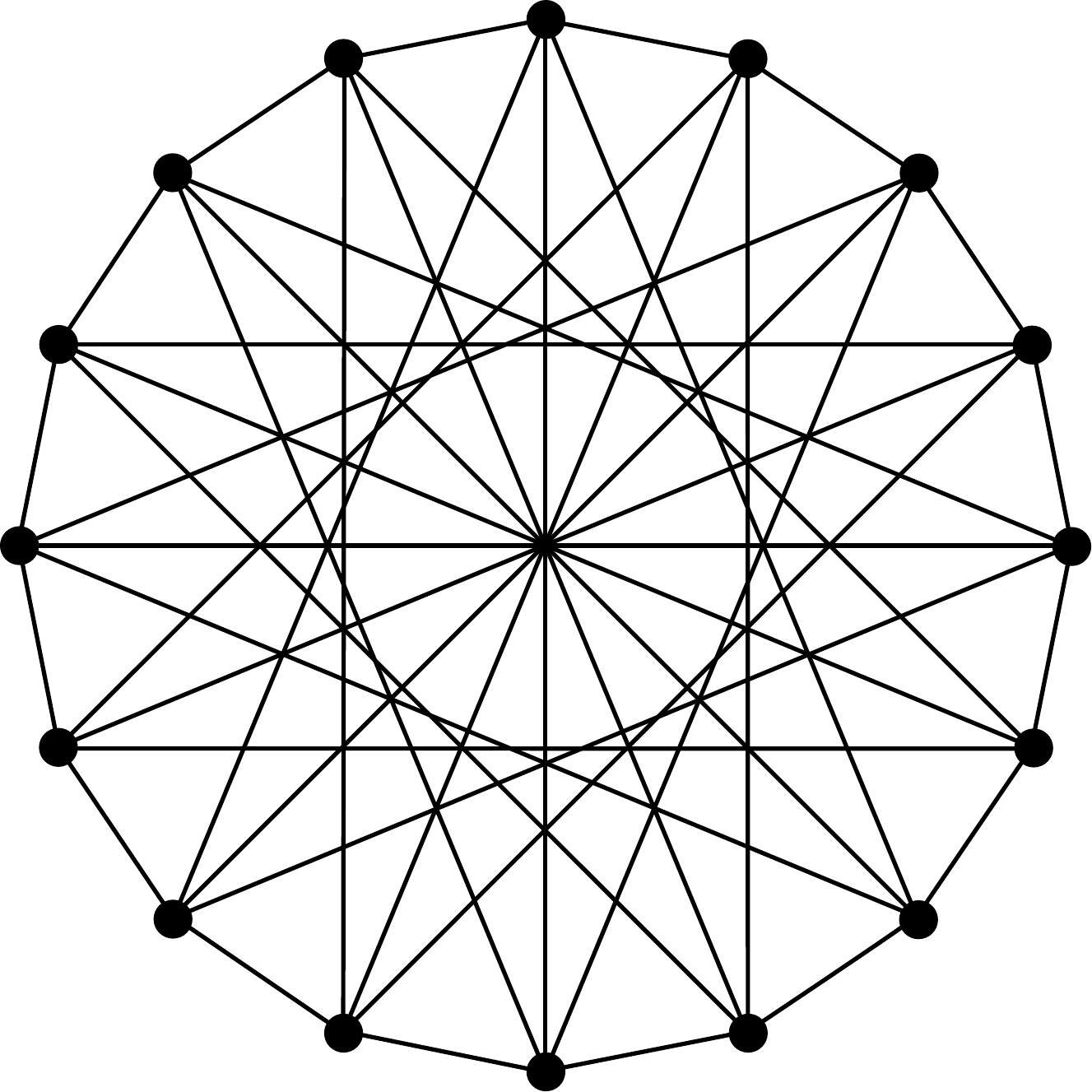}}
\caption{The circulant graphs $\circs {16} {1,2,8}$ and $\circs {16} {1,6,8}$ \label{fig:circ}}\vspace{-10pt}
\end{center}
\end{figure}

Although Table \ref{tab:circ} contains only cospectral pairs of even order, by using  {\em Sage} to perform a search we found that the graphs $\circs {27} {1,2,9}$ and $\circs {27} {1,4,9}$ on 27 vertices are $\DL$-cospectral.
 
    The {\em Sage} search that produced Table \ref{tab:circ} was restricted to circulant graphs, so it would not detect a pair of $\DL$-cospectral graphs where one is a circulant and the other is not.   
 Recall a result from \cite{BM,HvW,S.Ma}  that links strongly regular graphs and circulant graphs: If a connected circulant graph $\Circ n S$  is strongly regular, then $n$ must be a prime $p$ satisfying $p\equiv 1\mod 4$ and $\Circ p S$ must be isomorphic to a Paley graph on the field of order $p$ (see \cite{GR01} for the definition of a Paley graph). 

\begin{ex}\label{Paley29} 
{\rm The graph  $\circs{29}{1, 4, 5, 6, 7, 9, 13}$,  which is shown in Figure \ref{fig:circpaley}, is  a Paley graph on $29$ vertices that is strongly regular with  parameters $(29,14,6,7)$.  There are 40 additional nonisomorphic  strongly regular graphs on 29 vertices with the same strongly regular graph parameters \cite{spence-web} that are necessarily $\DL$-cospectral.  The $\DL$-spectrum of $\circs{29}{1, 4, 5, 6, 7, 9, 13}$ is $\left\{0^{(1)},\frac{87-\sqrt{29}}{2}^{(14)}, \frac{87+\sqrt{29}}{2}^{(14)}\right\}$; the $\DL$-spectrum can be computed from the formulas for the $\DL$-spectrum of a strongly regular graph (see Remark  \ref{rem:DL-SRG} and  \cite{sageverify}).  }
\end{ex}
\begin{figure}[h!]
\begin{center}
\includegraphics[scale=0.25]{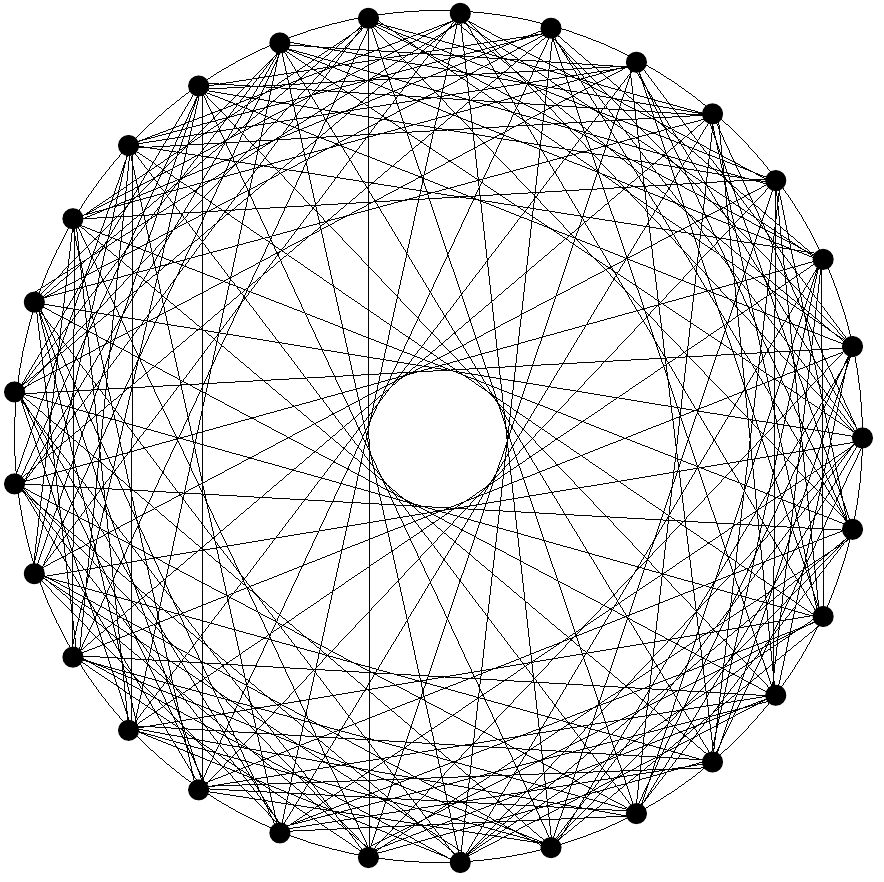}
\caption{The circulant graph $\circs{29}{1, 4, 5, 6, 7, 9, 13}$}
\label{fig:circpaley}
\end{center}
\end{figure}

A {\em circulant matrix} is a square matrix in which each row is formed from the row above by a right cyclic shift of plus one. A trivial example of a circulant matrix is that of the identity matrix $I_n$ where this shift pattern can be seen along the lead diagonal. 
 The adjacency, distance, and distance Laplacian matrices  of a circulant graph are circulant matrices.  
 
 For $j=1,\dots, n-1$, define $d_j^{(r)}$ as the distance between vertex $0$ and vertex $j$ in $\ccirc n r$. Thus, $d_j^{(r)}=\left\lceil\frac{j}{r}\right\rceil$ for $1\leq j\leq \left\lfloor\frac{n}{2}\right\rfloor$ and due to the symmetry of the circulant graphs $d^{(r)}_{n-j}=d^{(r)}_j$.
Define  $d_0^{(r)}=t(\ccirc n r)=\sum^{n-1}_{j=1}d_j^{(r)}$.  Then the first row of  the circulant matrix $\DL(\ccirc n r)$ is given by
\[
\left(d_0^{(r)},-d^{(r)}_1,\dots,-d^{(r)}_{n-1}\right)
\]
The remaining entries of the matrix $\DL(\ccirc n r)$ are defined by using the circulant matrix property. 


\begin{prop}\label{trans lem} The transmission of any consecutive circulant graph $\ccirc n r$ is 
\[t(\ccirc n r)=\left(\left\lfloor\frac{n-1}{2r}\right\rfloor+1\right)\left((n-1)-r\left\lfloor\frac{n-1}{2r}\right\rfloor\right).
\]
\end{prop}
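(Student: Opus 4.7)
The plan is to sum the distances $d_j^{(r)}$ by grouping them according to their value rather than by $j$. For each integer $d \ge 0$, let $N_d$ denote the number of vertices in $\ccirc n r$ at distance exactly $d$ from vertex $0$, so that $t(\ccirc n r) = \sum_{d \ge 1} d\cdot N_d$. Since $\ccirc n r$ is vertex-transitive, this sum does indeed compute the transmission of every vertex.

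The key geometric observation is that the distance ball of radius $d$ about $0$ in $\ccirc n r$ is the image in $\Z_n$ of the integer interval $\{-dr,\, -dr+1,\,\ldots,\,dr\}$, at least while this interval has not yet wrapped around. This follows by a quick induction on $d$: the $1$-neighborhood of $0$ equals $\{\pm 1, \pm 2, \ldots, \pm r\}$ by definition of the consecutive connection set, and each additional step extends the reachable interval by $r$ on either end. Consequently the ball of radius $d$ has size $\min(2dr+1,\; n)$, with wrap-around happening precisely when $2dr+1 > n$.

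Set $M := \lf (n-1)/(2r)\rf$, so that $2Mr \le n-1 < 2(M+1)r$. The previous paragraph then gives ball-size $2dr+1$ for $0 \le d \le M$ and $n$ for $d \ge M+1$. Taking successive differences yields $N_d = 2r$ for $1 \le d \le M$, $N_{M+1} = n-1-2Mr$, and $N_d = 0$ for $d \ge M+2$. Therefore
\[t(\ccirc n r) \;=\; \sum_{d=1}^{M} 2r\,d \;+\; (M+1)(n-1-2Mr) \;=\; rM(M+1) + (M+1)(n-1-2Mr),\]
and factoring out $M+1$ produces the closed form $(M+1)(n-1-Mr)$ claimed in the proposition.

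The only real subtlety is the boundary case $2Mr = n-1$: here the interval fills $\Z_n$ already at radius $M$, so $N_{M+1}=0$ and the second summand vanishes, but the factored expression still evaluates correctly to $rM(M+1)$. This is what I expect to be the main thing to double-check; once it is verified, no case split on the parity of $n$ or on whether $2r \mid (n-1)$ is required, because the ball-counting argument is uniform across all these cases.
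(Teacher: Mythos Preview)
Your argument is correct and is essentially the same as the paper's: both count $2r$ vertices at each distance $1,\dots,M=\lf(n-1)/(2r)\rf$ and $n-1-2Mr$ vertices at distance $M+1$, then sum. Your ball-of-radius-$d$ framing supplies a bit more justification for these counts than the paper's direct assertion, but the computation and structure are identical.
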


\bpf For any vertex $v$ of $\ccirc n r$, there are $2r$ vertices at distance one from $v$, $2r$ vertices at distance two, and so on, ending with $2r$ vertices at distance $\lf\frac{n-1}{2r}\rf$.  The remaining $n-1-2r\lf\frac{n-1}{2r}\rf$ vertices  are at distance $\lf\frac{n-1}{2r}\rf+1$.  Thus the transmission is
\bea
t(G)=t(v)&=&2r\left(\sum^{\left\lfloor\frac{n-1}{2r}\right\rfloor}_{i=0}i\right)+\left((n-1)-2r\left\lfloor\frac{n-1}{2r}\right\rfloor\right)\left(\left\lfloor\frac{n-1}{2r}\right\rfloor+1\right)\\
&=& \left(\left\lfloor\frac{n-1}{2r}\right\rfloor+1\right)\left((n-1)-r\left\lfloor\frac{n-1}{2r}\right\rfloor\right). \qquad\qquad \cvd
\eea
\epf

We can use Proposition \ref{trans lem} to easily calculate the Weiner index from the transmission.
\begin{cor}{ The Weiner index of a consecutive circulant is
\[W\left(\ccirc n r\right)=\frac{n}{2}t(G)
=\frac{n}{2}\left(\left\lfloor\frac{n-1}{2r}\right\rfloor+1\right)\left((n-1)-r\left\lfloor\frac{n-1}{2r}\right\rfloor\right).
\]
}\end{cor}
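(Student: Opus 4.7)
The plan is to derive this corollary as an almost immediate consequence of Proposition \ref{trans lem}, relying only on the relationship between the Wiener index and transmission for transmission regular graphs. First I would observe that the consecutive circulant $\ccirc n r$ is vertex-transitive (every rotation of $\Z_n$ is an automorphism), hence transmission regular, so $t(v)=t(\ccirc n r)$ for every vertex $v$.

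Next I would unwind the definition of the Wiener index given in Section \ref{ss:preserve}:
\[
W(G)=\tfrac{1}{2}\sum_{i=1}^n\sum_{j=1}^n d(v_i,v_j)=\tfrac{1}{2}\sum_{i=1}^n t(v_i).
\]
Applying transmission regularity makes the inner sum collapse to $n\cdot t(\ccirc n r)$, which yields the first equality $W(\ccirc n r)=\tfrac{n}{2}t(\ccirc n r)$ without any further calculation.

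Finally I would substitute the closed-form expression for $t(\ccirc n r)$ supplied by Proposition \ref{trans lem} to obtain the stated formula
\[
W(\ccirc n r)=\tfrac{n}{2}\!\left(\left\lfloor\tfrac{n-1}{2r}\right\rfloor+1\right)\!\left((n-1)-r\left\lfloor\tfrac{n-1}{2r}\right\rfloor\right).
\]
There is essentially no obstacle here; the only thing worth being careful about is flagging that vertex-transitivity (rather than mere regularity) is what delivers transmission regularity, so that the passage from $W(G)$ to $\tfrac{n}{2}t(G)$ is justified. Everything else is bookkeeping from Proposition \ref{trans lem}.
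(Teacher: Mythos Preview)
Your proposal is correct and matches the paper's approach: the paper does not give a separate proof for this corollary, simply noting that one can use Proposition \ref{trans lem} to compute the Wiener index from the transmission, which is exactly the route you take. Your explicit justification of transmission regularity via vertex-transitivity is a nice touch, though the paper has already recorded earlier that all circulant graphs are transmission regular.
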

 Since the pair $(n,r)$ determines both the number of eigenvalues and the transmission (by Proposition \ref{trans lem}), and the transmission equals the distance spectral radius, the next result is immediate.

\begin{cor}  For $r_j\le \lf \frac {n_j} 2\rf, j=1,2$, the consecutive circulants $\ccirc{n_1}{r_1}$ and $\ccirc{n_2}{r_2}$ are distance cospectral (and hence distance Laplacian cospectral) if and only if $n_1=n_2$ and $r_1=r_2$.
\end{cor}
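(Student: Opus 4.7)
The forward direction ($r_1=r_2$ and $n_1=n_2$ immediately implies $\ccirc{n_1}{r_1}=\ccirc{n_2}{r_2}$) is trivial, so the plan focuses on the converse. Suppose $\ccirc{n_1}{r_1}$ and $\ccirc{n_2}{r_2}$ are distance cospectral. First I would extract $n$: the distance matrix of $\ccirc{n_j}{r_j}$ is $n_j\times n_j$, so equal spectra (counted with multiplicity) force $n_1=n_2=:n$. Second, I would extract the transmission from the spectrum. A consecutive circulant is vertex-transitive and in particular transmission regular, so the all-ones vector is an eigenvector of $\D(\ccirc{n}{r})$ with eigenvalue $t(\ccirc{n}{r})$; since $\D$ is a nonnegative matrix, Perron--Frobenius identifies this eigenvalue as the spectral radius. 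Cospectrality therefore yields $t(\ccirc{n}{r_1})=t(\ccirc{n}{r_2})$.

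It remains to show that for fixed $n$, the map $r\mapsto t(\ccirc{n}{r})$ is injective on $\{1,2,\ldots,\lfloor n/2\rfloor\}$. The plan is to prove the stronger statement that this map is \emph{strictly decreasing}. Without loss of generality take $r_1\le r_2$. Since the connection sets satisfy $\{1,\ldots,r_1\}\subseteq\{1,\ldots,r_2\}$, the graph $\ccirc{n}{r_1}$ is a spanning subgraph of $\ccirc{n}{r_2}$, so $d_{\ccirc{n}{r_1}}(u,v)\ge d_{\ccirc{n}{r_2}}(u,v)$ for every pair $u,v$; summing over all pairs from a single vertex gives $t(\ccirc{n}{r_1})\ge t(\ccirc{n}{r_2})$. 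For strict inequality when $r_1<r_2$, I would exhibit one vertex pair whose distance actually drops: in $\ccirc{n}{r_1}$ the vertex $r_1+1$ sits at distance exactly $2$ from $0$ (via $0\to r_1\to r_1+1$, using that $r_1+1\le\lfloor n/2\rfloor$ rules out wrap-around shortcuts), whereas in $\ccirc{n}{r_2}$ it sits at distance $1$. Consequently $t(\ccirc{n}{r_1})>t(\ccirc{n}{r_2})$, and combined with the equality $t(\ccirc{n}{r_1})=t(\ccirc{n}{r_2})$ from the previous paragraph, we conclude $r_1=r_2$.

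The one step that needs a little care is the strict drop in distance: I must check that the path $0\to r_1\to r_1+1$ is not already a single edge in $\ccirc{n}{r_1}$ (i.e., that $r_1+1$ is not at distance $1$ from $0$ already), which is immediate from the definition of the connection set $\{1,\ldots,r_1\}$ together with $r_1+1\le\lfloor n/2\rfloor+1\le n-r_1$. As an alternative to this graph-theoretic argument one could simply differentiate the closed form in Proposition \ref{trans lem} with respect to $r$ treating $k=\lfloor(n-1)/(2r)\rfloor$ piecewise, but the subgraph comparison above is conceptually cleaner and avoids floor-function bookkeeping. With injectivity of the transmission in $r$ established, the chain $n_1=n_2$ and $t(\ccirc{n}{r_1})=t(\ccirc{n}{r_2})\Rightarrow r_1=r_2$ completes the proof, and the ``hence distance Laplacian cospectral'' parenthetical follows from Remark \ref{rem:D-DL}.
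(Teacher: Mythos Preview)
Your proposal is correct and follows the same skeleton as the paper: recover $n$ as the size of the spectrum, recover the transmission as the Perron eigenvalue of $\D$, then use that $r\mapsto t(\ccirc{n}{r})$ is injective for fixed $n$. The paper compresses all of this into a single sentence before the corollary, citing Proposition~\ref{trans lem} and leaving the injectivity of $r\mapsto t$ implicit in the closed-form formula; you instead establish that injectivity by the cleaner spanning-subgraph monotonicity argument (with a witness pair for strictness), which is a nice and fully rigorous way to fill the gap the paper skips over.
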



\section{Unimodality of the coefficients of the distance Laplacian characteristic polynomial}\label{s:unimod}

In their study of distance spectra of trees, Graham and Lov\'asz \cite{GL78} showed that for a tree $T$ and $\delta_k(T)$ denoting the  coefficient of $x^k$ in $\det(\D(T)-xI)=(-1)^np_{\D(G)}(x)$, the quantity $d_k(T)=(-1)^{n-1}\delta_k(T)/2^{n-k-2}$ is determined as a fixed linear combination of the numbers of certain subtrees in $T$; the values $d_k(T)$ are called the {\em normalized coefficients} of $T$.  
A sequence $a_0,a_1,\ldots, a_n$ of real numbers is {\em unimodal} if there is a $k$ such that $a_{i-1}\leq a_{i}$ for $i\leq k$ and $a_{i}\geq a_{i+1}$ for $i\geq k$.  Graham and Lov\'asz \cite{GL78}  conjectured that  the sequence  $d_0(T),\dots,d_{n-2}(T)$ of normalized coefficients is unimodal with the maximum value occurring at $\lf \frac n 2\rf$  for  a tree $T$ of order $n$.  Collins \cite{C89} showed that the location of the maximum for a path on $n$ vertices is approximately $n\left(1-\frac 1 {\sqrt 5}\right)$ and with modern computers it is straightforward to verify that this  location of the peak is strictly greater than    $\lf \frac n 2\rf$ for  $n\ge 18$ vertices (and for odd $n\ge 9$).  Aalipour et al.~\cite{GRWC15-uni} established the unimodality of the normalized coefficients and also of the absolute values of the coefficients of the distance characteristic polynomial.  These ideas naturally suggest exploring the behavior of  the coefficients of distance Laplacian characteristic polynomial.

The  unimodality of the sequence of absolute values of the coefficients of the distance characteristic polynomial is equivalent to that of the normalized coefficients \cite{GRWC15-uni, GL78}, and the proof used log-concavity and its connection to polynomials with real roots.  A sequence $a_1,a_2,\ldots, a_n$ of real numbers
 is {\em log-concave} if $a_j^2\geq a_{j-1}a_{j+1}$ for all $j=2,\dots,  n-1$.  A polynomial $p$ is \emph{real-rooted} if all roots of $p$ are real (by convention, constant polynomials are considered real-rooted).  The next  result describes the connections between real-rootedness, log-concavity, and unimodality, and was the key to the proof.
 
 \begin{lem}\label{unimodequiv} {\rm \cite{GRWC15-uni, brandensurvey}}
\begin{enumerate}[(a)]
\item\label{rrlc} If $p(x)=a_nx^n+\dots + a_1x+a_0$ is a real-rooted polynomial, then the coefficient sequence $a_i$ of $p(x)$ is log-concave.

\item\label{lcum} If a sequence $a_0,a_1,a_2,\dots, a_n$ is positive and log-concave, then $a_0,a_1,a_2,\ldots, a_n$ 
is unimodal.
\end{enumerate}
\end{lem}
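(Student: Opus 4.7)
The plan is to prove each part separately using classical tools from the theory of real-rooted polynomials; neither assertion involves graphs, so the argument is purely algebraic.

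For part (\ref{rrlc}), the natural route is through Newton's inequalities. Factor $p(x)=a_n\prod_{i=1}^n(x-r_i)$ with all $r_i\in\R$, so that $a_k=(-1)^{n-k}a_n\,e_{n-k}(r_1,\dots,r_n)$, where $e_j$ denotes the $j$-th elementary symmetric polynomial. Newton's inequalities assert that the normalized symmetric functions $p_j=e_j/\binom{n}{j}$ satisfy $p_j^2\geq p_{j-1}p_{j+1}$. Combined with the elementary bound $\binom{n}{j}^2\geq \binom{n}{j-1}\binom{n}{j+1}$, this upgrades to $e_j^2\geq e_{j-1}e_{j+1}$. Squaring absorbs the alternating signs carried by the $(-1)^{n-k}$ factor, so log-concavity transfers directly to $a_k^2\geq a_{k-1}a_{k+1}$ (read in absolute value when some $a_k$ are negative, which is the relevant interpretation for the $|\delta^L_k|$ application later).

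For part (\ref{lcum}), I would argue via ratios of consecutive terms. Since each $a_i$ is positive, the ratios $\rho_i:=a_i/a_{i-1}$ are well-defined and positive, and the hypothesis $a_i^2\geq a_{i-1}a_{i+1}$ rearranges to $\rho_i\geq \rho_{i+1}$. Hence $\rho_1\geq\rho_2\geq\cdots\geq\rho_n$. Choosing $k$ to be the largest index with $\rho_k\geq 1$ (taking $k=0$ if no such index exists), the monotonicity of the $\rho_i$ forces $a_{i-1}\leq a_i$ for all $i\leq k$ and $a_{i-1}> a_i$ for all $i>k$, which is exactly the unimodality condition with peak at $k$.

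The main obstacle, if one wishes to avoid quoting Newton's inequalities as a black box, is supplying a self-contained proof of them. The standard method applies Rolle's theorem iteratively: differentiating a real-rooted polynomial yields a real-rooted polynomial of one lower degree, while the operation $x^n p(1/x)$ reverses the coefficient sequence and also preserves real-rootedness. Alternating these two reductions brings the general $n$ down to the quadratic case, where the inequality reduces to the discriminant bound $b^2\geq 4ac$ after normalization by the appropriate binomial factor. Because the lemma is explicitly attributed to \cite{GRWC15-uni, brandensurvey}, one can equally well just invoke those references and omit the classical derivation entirely.
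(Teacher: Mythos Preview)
Your proof is correct. The paper does not actually prove this lemma: it is stated with attributions to \cite{GRWC15-uni, brandensurvey} and used as a black box, so there is no in-paper argument to compare against. Your approach via Newton's inequalities for part~(\ref{rrlc}) and the decreasing-ratio argument for part~(\ref{lcum}) is the standard route and is exactly what one finds in the cited sources (e.g.\ Br\"and\'en's survey). One small remark: your parenthetical about reading the inequality ``in absolute value when some $a_k$ are negative'' is unnecessary---since $a_{k-1}a_{k+1}=a_n^2\,e_{n-k+1}e_{n-k-1}$ with the signs $(-1)^{n-k+1}$ and $(-1)^{n-k-1}$ multiplying to $+1$, the inequality $a_k^2\geq a_{k-1}a_{k+1}$ already holds for the signed coefficients themselves, which is precisely the log-concavity statement the paper uses in Corollary~\ref{imply-unimodal}.
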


\begin{cor}\label{imply-unimodal} Let $A$ be a real symmetric matrix.  The  coefficient sequence $a_0,a_1,\dots, a_{n-1},a_n=1$ of $p_A(x)$ is log-concave.  If  a subsequence $a_s,\dots, a_t$ of $a_0,a_1,\dots, a_{n-1},a_n$ is positive then this subsequence is also unimodal.  If all entries of a subsequence of $a_0,a_1,\dots,a_{n-1}, a_n$  alternate in sign or all are negative, then the subsequence of absolute values $|a_s|,  \dots,  |a_t|$ is unimodal.
\end{cor}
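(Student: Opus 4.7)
My plan is to reduce each of the three assertions to a direct application of Lemma~\ref{unimodequiv}. The essential input is that a real symmetric matrix $A$ has a real-rooted characteristic polynomial, since all eigenvalues of $A$ are real. Writing $p_A(x) = a_n x^n + a_{n-1} x^{n-1} + \cdots + a_0$ with $a_n = 1$, Lemma~\ref{unimodequiv}(\ref{rrlc}) immediately yields the log-concavity inequality $a_j^2 \geq a_{j-1} a_{j+1}$ for every $j$ with $1 \leq j \leq n-1$. This gives the first assertion.

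For the second assertion, suppose $a_s, a_{s+1}, \dots, a_t$ is a positive consecutive subsequence. Log-concavity is a local property: the inequality $a_j^2 \geq a_{j-1} a_{j+1}$ at each intermediate index $s < j < t$ was already established in the first assertion. Hence the subsequence is both positive and log-concave, and Lemma~\ref{unimodequiv}(\ref{lcum}) yields its unimodality.

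For the third assertion, the task is to verify that the absolute-value subsequence $|a_s|, \dots, |a_t|$ is itself positive and log-concave, after which Lemma~\ref{unimodequiv}(\ref{lcum}) applies. Positivity follows because the hypotheses (signs strictly alternating, or all signs negative) force every term to be nonzero. For log-concavity, the key observation is that in either case any two entries at positions $j-1$ and $j+1$ have the same sign (opposite to the sign of $a_j$ when signs alternate, and uniformly negative otherwise), so $a_{j-1} a_{j+1} \geq 0$. Combining this with $a_j^2 \geq a_{j-1} a_{j+1}$ from the first assertion gives
\[
|a_j|^2 \;=\; a_j^2 \;\geq\; a_{j-1} a_{j+1} \;=\; |a_{j-1}|\,|a_{j+1}|,
\]
so the absolute-value subsequence is log-concave and hence unimodal.

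The structure of the argument is essentially assembly: there is no serious obstacle since Lemma~\ref{unimodequiv} does all the heavy lifting. The only point that requires a little care is the sign analysis in the third assertion, namely the observation that the alternating-sign or all-negative hypothesis is exactly what is needed to turn $a_{j-1} a_{j+1}$ into $|a_{j-1}||a_{j+1}|$ when passing to absolute values.
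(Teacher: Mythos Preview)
Your proof is correct and follows essentially the same approach as the paper: both invoke real-rootedness of $p_A$ to obtain log-concavity via Lemma~\ref{unimodequiv}(\ref{rrlc}), then apply Lemma~\ref{unimodequiv}(\ref{lcum}) directly for positive subsequences, and for the alternating/all-negative case observe that $a_{j-1}a_{j+1}=|a_{j-1}||a_{j+1}|$ so that log-concavity transfers to the absolute values. Your write-up is slightly more explicit about why the sign hypotheses force $a_{j-1}a_{j+1}\ge 0$ and why the terms are nonzero, but the argument is the same.
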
 
\bpf  Since all the eigenvalues of $A$ are real, the log-concavity of  $a_0,\dots, a_{n-1},a_n$ follows immediately from Lemma \ref{unimodequiv}, and any positive subsequence is unimodal.  If the signs of the subsequence $a_s,\dots, a_t$ alternate, or if all values are negative, then $|a_j^2|=a_j^2\geq a_{j-1}a_{j+1}=|a_{j-1}| |a_{j+1}|$ for $ s+1\le j \le t=1$, so the positive subsequence $|a_s|,  \dots,   |a_t|$ is also log-concave, and therefore unimodal. 
\epf

 The {\em $\ell$th symmetric function of $\alpha_1,\dots,\alpha_n$}, denoted by $S_\ell(\alpha_1,\dots,\alpha_n)$ is the sum over all products of $\ell$ distinct elements of $\{\alpha_1,\dots,\alpha_n\}$.  Observe that if $0=\alpha_1=\dots=\alpha_c<\alpha_{c+1}<\dots<\alpha_n$, then $S_\ell(\alpha_1,\dots,\alpha_n)=S_\ell(\alpha_{c+1},\dots,\alpha_n)$ for $\ell\le n-c$ and $S_\ell(\alpha_1,\dots,\alpha_n)=0$ for $\ell>n-c$.
 
\begin{thm} Let $G$ be a connected graph, and let $p_{\DL(G)}(x)=x^n+\delta^L_{n-1}x^{n-1}+\dots+\delta^L_{1}x$.  Then the sequence $\delta^L_{1},\dots,\delta^L_{n-1},\delta^L_{n}=1$ is log-concave and $|\delta^L_{1}|,\dots,|\delta^L_{n}|$ is unimodal.  In fact, $|\delta^L_{1}|\geq \dots \geq |\delta^L_{n}|$. 
\end{thm}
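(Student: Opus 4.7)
My strategy combines the real-rootedness machinery of Corollary~\ref{imply-unimodal} with the Aouchiche--Hansen bound $\dlev_2(G)\ge n$ from \cite{AH13}. Because $G$ is connected, $\dlev_1=0$ is a simple eigenvalue, so $p_{\DL(G)}(x)=x\prod_{i=2}^n (x-\dlev_i)$ and expanding yields
\[
\delta^L_k=(-1)^{n-k}\,S_{n-k}(\dlev_2,\dots,\dlev_n)\qquad(k=1,\dots,n),
\]
where $S_\ell$ denotes the $\ell$-th elementary symmetric polynomial. Since each $\dlev_i\ge 0$, the values $|\delta^L_k|=S_{n-k}(\dlev_2,\dots,\dlev_n)$ are nonnegative and the $\delta^L_k$ strictly alternate in sign.

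Log-concavity of $\delta^L_1,\dots,\delta^L_n=1$ is then immediate from Corollary~\ref{imply-unimodal} applied to the real symmetric matrix $\DL(G)$, and unimodality of $|\delta^L_1|,\dots,|\delta^L_n|$ follows from the alternating-sign clause of the same corollary. So every conclusion of the theorem except the final monotonicity claim $|\delta^L_1|\ge\cdots\ge|\delta^L_n|$ is in hand after this step.

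For the monotonicity, the identification of $\delta^L_k$ above reduces the claim to
\[
S_\ell(\alpha_1,\dots,\alpha_{n-1}) \ge S_{\ell-1}(\alpha_1,\dots,\alpha_{n-1}),\qquad 1\le\ell\le n-1,
\]
where $\alpha_i:=\dlev_{i+1}\ge n$ by Aouchiche--Hansen. I would prove the following induction-friendly generalization by induction on $m$: for every $1\le m\le n-1$, every $\alpha_1,\dots,\alpha_m\ge n$, and every $1\le \ell\le m$, $S_\ell(\alpha)\ge S_{\ell-1}(\alpha)$. The base case $m=1$ is just $\alpha_1\ge 1$. For the inductive step, I first check the inequality at the uniform point $\alpha_1=\cdots=\alpha_m=n$, where $S_\ell/S_{\ell-1}=n(m-\ell+1)/\ell$ and the bound $\ell\le m<n$ gives $n(m-\ell+1)\ge n>\ell$. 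Then I slide each $\alpha_j$ from $n$ up to its actual value; the partial derivative
\[
\frac{\partial}{\partial\alpha_j}\bigl(S_\ell-S_{\ell-1}\bigr)\;=\;S^{(j)}_{\ell-1}-S^{(j)}_{\ell-2},
\]
with $S^{(j)}_r$ denoting the $r$-th elementary symmetric polynomial in the remaining $m-1$ variables, is nonnegative by the inductive hypothesis for $\ell\ge 2$, and the case $\ell=1$ is immediate since $S_1\ge mn\ge 1=S_0$. Thus $S_\ell-S_{\ell-1}$ stays nonnegative as each $\alpha_j$ rises to its true value, which gives the required symmetric-function inequality with $m=n-1$.

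The main obstacle is this last step. Corollary~\ref{imply-unimodal} alone only delivers unimodality, so pinning the peak of $|\delta^L_k|$ at the left endpoint $k=1$ genuinely requires external input on the size of the eigenvalues. The bound $\dlev_2\ge n$ is precisely what makes the uniform-corner comparison $n(m-\ell+1)>\ell$ hold across the full range $\ell\le m\le n-1$; without it, the base of the induction would fail and the monotonicity claim would not survive.
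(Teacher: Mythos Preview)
Your argument is correct, and the first half (log-concavity, alternating signs, unimodality via Corollary~\ref{imply-unimodal}) matches the paper exactly. The difference is in how you establish the final monotonicity $|\delta^L_1|\ge\cdots\ge|\delta^L_n|$. You prove the full chain of inequalities $S_\ell\ge S_{\ell-1}$ directly, by induction on the number of variables combined with a sliding argument, without ever invoking the unimodality you just established. The paper instead exploits unimodality: once $|\delta^L_1|,\dots,|\delta^L_n|$ is known to be unimodal, the peak is at $k=1$ as soon as $|\delta^L_1|>|\delta^L_2|$, and that single inequality follows in two lines from $\dlev_2\ge n$ via
\[
|\delta^L_2|=\sum_{i=2}^{n}\prod_{\substack{j\ge 2\\ j\ne i}}\dlev_j \le (n-1)\prod_{j=3}^{n}\dlev_j < \dlev_2\prod_{j=3}^{n}\dlev_j=|\delta^L_1|.
\]
Both routes rest on the same Aouchiche--Hansen input $\dlev_2\ge n$; the paper's is shorter because it lets unimodality do most of the work, while yours is self-contained for the monotonicity step and in fact proves a slightly more general symmetric-function inequality along the way.
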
 

{\em Proof.} It is well known that the distance Laplacian is positive semidefinite (this follows immediately from Gershgorin's disk theorem), and that zero is a simple eigenvalue (see, for example, \cite{AH13}).  Since $\delta^L_{k}=$\break$(-1)^{n-k} S_{n-k}(\dlev_1,\dlev_2,\dots,\dlev_n)=(-1)^{n-k} S_{n-k}(\dlev_2,\dots,\dlev_n)$ and $\dlev_2,\dots,\dlev_n$  are positive, all the nonzero coefficients alternate in sign. By Corollary 
\ref{imply-unimodal} the sequence $\{|\delta_k^L|\}_{k=1}^n$ is unimodal.

To show that $|\delta^L_{1}|\geq \dots \geq |\delta^L_{n}|$, by unimodality it is sufficient to show that $|\delta^L_1| > |\delta^L_2|$. 
Since  $\partial^L_2 \geq n$ \cite[Corollary 3.6]{AH13}, 
\[ |\delta^L_2| = \sum\limits_{i=2}^n \prod\limits_{j \neq i} \partial^L_j \leq (n-1) \prod\limits_{j=3}^{n} \partial^L_j < \prod\limits_{j=2}^n \partial^L_j = |\delta^L_1|. \cvd \]


\section{Concluding remarks}

In this paper, we studied graph cospectrality with respect to the distance Laplacian matrix.  We showed that several global properties of a graph, such as degree sequence, transmission sequence, diameter, and number of nontrivial automorphisms, and also several local properties,   such as the presence of a leaf, a dominating vertex, and a cut vertex, are not preserved by $\DL$-cospectrality.
It would be interesting to determine whether or not other properties,  such as those listed in the next question, 
are preserved by $\DL$-cospectrality. 
\begin{quest}\label{q:preserve} Are the following  properties  preserved by $\DL$-cospectrality?
\ben
\item Acyclicity (being a tree).
\item Bipartiteness.  
\item\label{tr} Transmission regularity. 
 \item Regularity.
 \item Strong regularity. 
\een
\end{quest}

 It is not known whether trees are $\DL$-spectrally determined \cite{AH13}; if so, then the property of being a tree would necessarily be preserved by $\DL$-cospectrality.
For each of the other properties in Question \ref{q:preserve}, examples of a $\DL$-cospectral pair with both graphs having the property and a $\DL$-cospectral pair with neither graph having the property have been presented  (in Sections \ref{ss:preserve} and \ref{s:TR}). However, we do not have examples of $\DL$-cospectral pairs where one member has the property and the other does not.

We also established several general methods for producing $\DL$-cospectral graphs based on sets of vertices sharing common neighborhoods, transmissions, and partial transmissions. We used these constructions to produce infinite families of $\DL$-cospectral graphs. Future work could include  finding additional methods for constructing $\DL$-cospectral graphs. Conversely, it would also be interesting to find certain local structures or general properties of a graph that guarantee the graph has no $\DL$-cospectral mates; one conjectured such property is acyclicity. 

Since two transmission regular graphs are $\D$-cospectral if and only if they are $\DL$-cospectral, results about $\D$-cospectrality of transmission regular graphs lead to results about $\DL$-cospectrality. To this end, we studied various families of strongly regular, distance-regular, and circulant graphs, and showed that many of them have $\DL$-cospectral mates. We also constructed an infinite family of graphs which are transmission regular but not  regular; it is an open question to determine whether this family of graphs is $\DL$-spectrally determined. 

Finally, we established that the absolute values of coefficients of the distance Laplacian characteristic polynomial are unimodal, and in fact decreasing. It would be interesting to establish other structural results about the coefficients of the distance Laplacian characteristic polynomial and their relation to $\DL$-cospectrality.


\section*{Acknowledgements}  
 We gratefully acknowledge financial support for this research from the following grants and organizations: NSF grants \#1603823 and \#1604458,  NSA grant \#H98230-18-1-0017, and the Combinatorics Foundation. \vspace{-3pt}


\end{document}